      \theoremstyle{plain}
      \newtheorem{theorem}{Theorem}[section]
      \newtheorem{lemma}[theorem]{Lemma}
      \newtheorem{corollary}[theorem]{Corollary}
      \theoremstyle{definition}
      \newtheorem{definition}[theorem]{Definition}
      \newtheorem{remark}{Remark}
     \newtheorem*{notation}{Notation}
      \newcommand{\nil}{\emptyset}
      \newcommand{\g}{\mathfrak g}
      \newcommand{\h}{\mathfrak{h}}
       \newcommand{\E}{\mathcal{E}}
       \newcommand{\F}{\mathcal{F}}
       \renewcommand{\S}{\mathcal{S}}
       \newcommand{\N}{\mathbb{N}}
        \newcommand{\R}{\mathcal{R}}
       \newcommand{\A}{\mathcal{A}}
       \newcommand{\B}{\mathcal{B}}
     \newcommand{\Con}{\operatorname{Con}}
     \newcommand{\atom}[1]{\textbf{atom}(\mathcal{#1})}
     \newcommand{\n}[1]{\textbf{n}(#1)}
      \newcommand{\con}[2]{\Con(\mathfrak{#1},\mathcal{#2})}
      \newcommand{\tcon}[2]{\Con_{\text{t}}(\mathfrak{#1},\mathcal{#2})}
      \newcommand{\Cl}[1]{\text{Cl}(#1)}
      \newcommand{\Ncp}[2]{(\hat{\mathfrak{#1}},\hat{\mathcal{#2}})}
      \newcommand{\word}[3]{W(#1,#2;\mathfrak{#3})}
      \def\@setcopyright{}
      \def\serieslogo@{}
\begin{document}

   \author{Ali Rejali}
   \address{University of Isfahan}  
   \curraddr{Department of Mathematics, Faculty of  Sciences , University of Isfahan, Isfahan, 81746-73441, Iran}
   \email{rejali@sci.ui.ac.ir}


   \author{Meisam Soleimani Malekan}
   \address{Department of Mathematics, Ph. D student, University of Isfahan, Isfahan, 81746-73441, Iran}
   \email{m.soleimani@sci.ui.ac.ir}

   \title{Configuration Equivalence is not Equivalent to Isomorphism}

     \begin{abstract}
Giving a condition for the the amenability of groups, Rosenblatt and Willis, first introduced the concept of configuration. From the beginning of the theory, the question whether the concept of configuration equivalence coincides with the concept of group isomorphism was posed. We negatively answer this question by introducing two non-isomorphic, solvable and hence amenable groups which are configuration equivalent. Also, we will prove this conjecture, due to Rosenblatt and Willis, that configuration equivalent groups, both include the free non-Abelian group of same rank or not. We show that two-sided equivalent groups have same class numbers.
   \end{abstract}

   \subjclass[2010]{20F05, 20F16, 20F24, 43A07}

   \keywords{Configuration, Two-sided Configuration, Group Isomorphism, Conjugacy Class, Class Number, Paradoxical Decomposition }

   \thanks{The first authors would like to express  their gratitude toward Banach Algebra Center of Excellence for Mathematics, University of Isfahan. We also thank Yves de Cornulier for the ideas in the proof of Theorem \ref{non-isomorphic generatin-bijective groups}. }
   \thanks{}

   \dedicatory{}

   \date{\today}


   \maketitle



   \section{Introduction and Definitions}
   In this paper, all groups were assumed to be finitely generated and discrete. Let $G$ be a group, we denote the identity of the group $G$ by $e_G$. Given any finite (ordered) subset $\g =(g_1,\dotsc, g_n)$ of $G$, there is a Cayley graph denoted by $\Gamma:=\Gamma(G,\mathfrak{g})$, with vertices being the elements of $G$ and the directed edges being from $g$ to $g_ig$ for $g\in G$ and $i\in\{1,\dotsc,n\}$. 
\par  let $G$ be a group, $\mathfrak{g} =(g_1,\dotsc, g_n)$ be an ordered generating set  and $\Gamma=\Gamma(G,\mathfrak{g})$ be its corresponded Cayley graph. Assume that $\E =\{E_1,\dotsc,E_m\}$ is a finite partition of $G$, which can be considered as a coloring of $\Gamma$ by m colors. 
  \par A \textit{configuration} $C$ is an $(n+1)$-tuple of colors $(c_0,\dotsc, c_n)$ with each $c_k$ being one of the $m$ colors, and there are $x_0, x_1,\dotsc, x_n\in G$ such that $x_i$ is the color $c_i$, $i=0,1,\dotsc, n$, and for each $i$, $x_i=g_ix_0$. In this case, we may say that $(x_0,x_1,\dotsc,x_n)$ has the configuration $C$. Simultaneously considering the right multiplication, the concept of two-sided configuration is reached; A \textit{two-sided configuration} is a $(2n + 1)$-tuple $C= (c_0,c_1,\dotsc,c_{2n})$ satisfying $c_i\in\{1,\dotsc,m\}$, $i=0,1,\dotsc,2n$, and there exists $x\in E_{c_0}$ such that $g_ix\in E_{c_i}$ and $xg_i\in E_{c_{i+n}}$ for each $i\in\{1,\dotsc,n\}$. \par
First, the concept of configuration was introduced by Rosenblatt and Willis to give a characterization for the amenability of groups and then, to characterize normal sets, the concept of two-sided configuration was suggested.
\par  For $\g$ and $\E$ as above, we called $(\g,\E)$ a \textit{configuration pair}. The set of configurations (two-sided configurations, resp.) corresponding to the configuration pair $(\g,\E)$ will be denoted by $\con gE$ ($\tcon gE$, resp.). The set of all configuration and two-sided configuration sets of $G$, denoted by $\Con(G)$ and $\Con_\text t(G)$, respectively. 
\par The origin of the theory of configuration, is the conjecture raised in \cite{rw}, that the combinatorial properties of configurations can be used to characterize various kinds of behavior of groups (like the group being Abelian or the group containing a non-Abelian free subgroup). This conjecture leads to the notion of (two-sided) configuration equivalence;  A group $G$ is \textit{configuration contained} in a group $H$, written $G\precsim H$, if $\Con(G)\subseteq\Con(H)$, and two groups $G$ and $H$ are \textit{configuration equivalent}, written $G\approx H$, if $\Con(G) =\Con(H)$. The concepts of being \textit{two-sided configuration contained}, and \textit{two-sided configuration equivalent} are similarly defined, the notations used denoting these concepts are  \enquote{$\precsim_\text t$} and \enquote{$\approx_\text t$}. 

 In \cite{arw}, the first steps of the theory were taken. It was shown there that finiteness and periodicity are the properties which can be characterized by configuration. In that paper, the authors proved that for two configuration equivalent groups, the isomorphism classes of their finite quotients were the same. The finite index property can be extended to Abelian quotient property (see \cite{ary}). Also, it was shown in \cite{arw} that two configuration equivalent groups, should satisfy in the same semi-group laws, and in \cite{rs}, this result was generalized by proving that same group laws should be established in configuration equivalent groups. Hence, in particular, being Abelian and the group property of being nilpotent of class $c$ are, in particular, another properties which can be characterized by configuration (see \cite{arw} and \cite{ary}). In \cite{ary}, it was shown that if $G\approx H$, and $G$ is a torsion free nilpotent group of Hirsch length $h$, then so is $H$. It was interesting to know the answer to the question whether being FC-group is conserved by equivalence of configuration. In \cite{ary}, this question was answered under the assumption of being-nilpotent. In \cite{rs}, this question was affirmatively answered without any extra hypothesis. In addition, it was shown in that paper that the solubility of a group $G$ can be recovered from $\Con G$. In our recent paper, \cite{rs1}, we showed that the notion of normality can be obtained from two-sided configuration equivalence. Also, in the presence of normality, we showed that if $G$ and $H$ are two-sided equivalent, and if $G$ has a normal subgroup $N$, which the quotient $G/N$ is finitely presented and $G$ has a \enquote{recognizable} configuration pair w.r.t. $N$, then $H$ contains a normal subgroup $\mathfrak{N}$, such that $G/N\cong H/\mathfrak{N}$. It was shown, that the class of \enquote{polynomial type} groups -- involving finite, Abelian, free and polycyclic groups -- satisfied the \enquote{recognizability} condition.  
 
\par We also interested in investigating the question: For which subclasses of the class \textbf{G} of all groups, does configuration equivalence coincide with isomorphism? In \cite{arw}, this question was answered positively for the class of finite, free and Abelian groups. In \cite{ary}, it was shown that those groups with the form of $\mathbb{Z}^n\times F$, where $\mathbb Z$ is the group of integers, $n\in\mathbb N$ and $F$ is an arbitrary finite group, are determined up to isomorphism by their configurations. In \cite{ary}, it was proved that if $G\approx D_\infty$, where $D_\infty$ is the infinite dihedral group, then $G\cong D_\infty$.
 \par In \cite{rs}, we pointed out that it was the existence of \enquote{golden} configuration pairs which implied isomorphism. Indeed, we showed that, in the class of finitely presented Hopfian groups with golden configuration pair, configuration equivalence coincided with isomorphism.
 \par In the light of two-sided configuration, it was proved in \cite{rs1} that, for \enquote{polynomial type} groups, and for groups with finite commutator subgroup, the \enquote{$\approx_\text t$} and \enquote{$\cong $} are equivalent. Specially, for polycyclic or FC--groups, these two notation are equivalent. 
 \par We will prove that if $G$ and $H$ are two configuration equivalent groups, and if $G$ contains $\mathbb F_n$, the non-Abelian free group of rank $n$, then $H$, also, contains $\mathbb F_n$.
 \par In the present paper, we define configuration sets for a finite sigma algebras of a group, and with the help of them, we will show that two-sided equivalence groups, have the same number of normal subgroups of finite index $n$, $n\in\mathbb{N}$. Also, if $G\approx_\text tH$, then the cardinality of normal subgroups which their quotient are polycyclic is the same in both $G$ and $H$. The class number- the number of different conjugacy classes- will be shown to be equal for two-sided equivalent groups. In the class of two-sided equivalent groups which have a finite class number, we will study some type of subgroups, and will show that in this class the set of finite, and polycyclic subgroups should be the same.
  \par The question whether (two-sided) configuration equivalence implies isomorphism has been seen to be open since the beginning of the theory of configuration. We negatively answer this open question by presenting two non-isomorphic solvable and hence amenable groups, both having the same (two-sided) configuration sets. 
   Like our two recent papers (\cite{rs} and \cite{rs1}), we will use the following notation:
  \begin{notation}
  \label{notation1}
Let $G$ be a group with $\mathfrak{g}=(g_1,\dotsc,g_n)$ as its ordered subset. Let $p\in\mathbb{N}$, $J\in\{1,2,\dotsc,n\}^p$ and $\rho\in\{\pm1\}^p$. We denote the product 
$\prod_{i=1}^pg_{J(i)}^{\rho(i)}$ by $W(J,\rho;\mathfrak{g})$. We call the pair 
$(J,\rho)$ a \textit{representative pair} in $\g$ and $W(J,\rho;\mathfrak{g})$ a \textit{word} corresponding to $(J,\rho)$ in $\g$. 
\end{notation}
When we speak of a representative pair, $(J,\rho)$, we assume the same number of components for $J$ and $\rho$.  The number of components of $J$, denotes by $\n J$.

 \section{Configuration and Finite Sigma Algebras}
 What is really important in configuration is the image of subsets of a group $G$, under left translations by finite subsets of  $G$. So, it seems that for sets $\con gE$, or $\tcon gE$ of a group $G$, we can replace $E$ by a sigma algebra $\A$, and indeed we can. we involve sigma algebras in the theory of configuration as follows:\par 
 Let $G$ be a group. There is a correspondence between the finite sigma algebras of $G$, and finite partitions of $G$. Indeed, for a finite sigma algebra $\A$, the set of atoms of $\A$, $\E:=\{E_1,\dotsc,E_m\}$ is a partition of $G$. We denote the atomic sets of a sigma algebra $\A$ by $\atom A$. Also, if $\mathcal{C}$ is a finite collection of subsets of $G$, we use $\sigma(\mathcal C)$ to denote the sigma algebra generated by $\mathcal C$. In the following, we always consider sigma algebras to be finite. \par
Now, we try to rewrite our symbols in configuration for sigma algebras; For a sigma algebra $\A$, we define $\con gA$ to be $\Con(\g,\atom A)$. Similarly, we can define $\tcon gA$. Remember following efficient symbol from \cite{rs1}:
\begin{notation}
\label{notation2}
Let $G$ and $H$ be two groups with $\g$ and $\h$ as their generating set, respectively. Coloring Cayley graph $\Gamma_1:=\Gamma(G,\mathfrak{g})$ and $\Gamma_2:=\Gamma(H,\mathfrak{h})$ with same colors, we get partitions $\E$ and $\F$ of $G$ and $H$, respectively. 
For two sets $E\in E$ and $F\in\F$, we write $E\circledS F$ to show that we have two sets of the same color. In particular, If we have {\small $\con gE=\con hF$} or {\small $\tcon gE=\tcon hF$}, for configuration pairs $(\g,\E)$ and $(\h,\F)$ for groups $G$ and $H$, respectively; This implies that, their corresponded Cayley graphs $\Gamma_1:=\Gamma(G,\mathfrak{g})$ and $\Gamma_2:=\Gamma(H,\mathfrak{h})$, colored with same colors.
\end{notation}
We can also use $\circledS$ for sigma algebras; Let $\E:=\{E_1,\dotsc,E_m\}$ and $\F:=\{F_1,\dotsc,F_m\}$ be partitions of $G$ and $H$ resp. such that $E_i\circledS F_i$, $i=1,\dotsc,m$. For $A\in\sigma(\E)$ and $B\in\sigma(\F)$, say $A\circledS B$, when 
$$\{k:\,E_k\cap A\neq\nil\}=\{k:\, F_k\cap B\neq\nil\}$$
In other words, if $A\circledS B$, and $A=E_{i_1}\cup\dotsb\cup E_{i_j}$, then $B=F_{i_1}\cup\dotsb\cup F_{i_j}$. 
The following technical lemma will be used in the following:
\begin{lemma}
\label{important}
Let $G$ and $H$ be two groups with sigma algebras $\A$ and $\B$, resp. and $\con gA=\con hB$. Suppose $A_1,A_2\in\A$ and $B_1, B_2\in\B$, are such that $A_i\circledS B_i$, $i=1,2$, and $g\in\g$ corresponded to $h\in\h$. we have 
\begin{itemize}
\item[(1)] If $gA_1\subseteq A_2$, then $hB_1\subseteq B_2$,
\item[(2)] If $gA_1=A_2$, then $hB_1=B_2$
\end{itemize}
\end{lemma}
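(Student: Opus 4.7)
The plan is to translate the set-theoretic hypotheses $A_i \circledS B_i$ into statements about atoms and then use the configuration equality $\con gA = \con hB$ at the level of single colors to transfer atom-wise containments from $G$ to $H$. Write $\atom{A} = \{E_1,\dotsc,E_m\}$ and $\atom{B} = \{F_1,\dotsc,F_m\}$ with $E_k \circledS F_k$; by definition of $\circledS$ there exist index sets $I_1, I_2 \subseteq \{1,\dotsc,m\}$ with $A_i = \bigcup_{k \in I_i} E_k$ and $B_i = \bigcup_{k \in I_i} F_k$ for $i=1,2$. Also fix the index $i_0$ with $g = g_{i_0}$ in $\g$, so that $h = h_{i_0}$ in $\h$.

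For part (1), take any $y \in B_1$. Then $y \in F_j$ for some $j \in I_1$, and the element $hy \in H$ lies in some atom $F_{j'}$. The configuration of $y$ with respect to $(\h,\B)$ therefore has $c_0 = j$ and $c_{i_0} = j'$. Since $\con gA = \con hB$, this configuration is realized at some $x \in G$: concretely, $x \in E_j$ and $gx \in E_{j'}$. Because $j \in I_1$ we have $x \in A_1$, hence $gx \in gA_1 \subseteq A_2$; so $E_{j'}$ meets $A_2$, which forces $E_{j'} \subseteq A_2$ and thus $j' \in I_2$. Therefore $hy \in F_{j'} \subseteq B_2$, proving $hB_1 \subseteq B_2$.

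For part (2), part (1) already gives $hB_1 \subseteq B_2$, so the task is the reverse inclusion. The trick is to run the same argument backwards using the bijectivity of left multiplication by $g$. Pick $y' \in B_2$, let $F_k$ be its atom (so $k \in I_2$), and let $F_{k'}$ be the atom containing $h^{-1}y'$. The configuration of $h^{-1}y'$ has $c_0 = k'$ and $c_{i_0} = k$; by $\con gA = \con hB$ there is $x \in G$ with $x \in E_{k'}$ and $gx \in E_k \subseteq A_2 = gA_1$. Since left multiplication by $g$ is injective, $gx \in gA_1$ implies $x \in A_1$, so $E_{k'}$ meets $A_1$ and therefore $E_{k'} \subseteq A_1$, giving $k' \in I_1$. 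Hence $h^{-1}y' \in F_{k'} \subseteq B_1$, i.e.\ $y' \in hB_1$.

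I do not expect any serious obstacle; the only subtle point is the step where an atom that meets a union of atoms must be contained in it, and the use of $gA_1 = A_2$ in the form $A_2 = gA_1$ combined with the injectivity of $x \mapsto gx$ to pull the inclusion back from $G$ to its preimage. Everything else is bookkeeping over the $\circledS$ correspondence.
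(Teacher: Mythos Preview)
Your proof is correct and follows essentially the same approach as the paper: both arguments decompose $A_i$ and $B_i$ into atoms via index sets $I_1,I_2$, then use the equality $\con gA=\con hB$ to transfer the implication ``$c_0\in I_1\Rightarrow c_{i_0}\in I_2$'' (for part (1)) and the biconditional ``$c_0\in I_1\Leftrightarrow c_{i_0}\in I_2$'' (for part (2)) from $G$ to $H$. Your version simply unpacks these implications element by element, including the explicit use of injectivity of left multiplication by $g$ for the reverse inclusion in (2), whereas the paper states the configuration-level implications directly.
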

\begin{proof}
Set 
\begin{align*}
\g&=(g_1,\dotsc,g_n),\quad \atom{A}=\{E_1,\dotsc,E_m\}\\
\h&=(h_1,\dotsc,h_n),\quad \atom{B}=\{F_1,\dotsc,F_m\}
\end{align*}
Without loss of generality, let $g$ and $h$ match $g_1$ and $h_1$. Also, set 
$$I_k:=\{i: E_i\cap A_k\neq\nil\},\quad k=1,2$$
So, by assumptions, 
$$A_k=\bigcup_{i\in I_k}E_i,\quad B_k=\bigcup_{i\in I_k}F_i\quad(k=1,2)$$
Now, for $C=(c_0,c_1,\dotsc,c_n)$ in $\con gA$, $c_1\in I_2$ if $c_0\in I_1$, this proves (1).\par 
For proving (2), note that if $C=(c_0,c_1,\dotsc,c_n)$ in $\con gA$, then $c_0\in I_1$, if and only if $c_1\in I_2$.
\end{proof}
If $\tcon gE=\tcon hF$, one can easily get an analog of the above lemma for left multiplication replaced with right multiplication.
\par Let $G$ and $H$ be two groups. Consider partitions $\R$ and $\S$ of $G$ and $H$ respectively, and their refinements, $\R'$ and $\S'$. Assume that 
\begin{align*}
\R&=\{R_1,\dotsc,R_l\}, \quad \R'=\{R'_1,\dotsc,R'_k\}\\
\S&=\{S_1,\dotsc,S_l\},\quad \S'=\{S'_1,\dotsc,S'_k\}\\
\end{align*}
 We may say that these two pairs $(\R',\R)$ and $(\S',\S)$ are \textbf{similar} and may write 
$(\R',\R)\sim(\S',\S)$, if 
$$\{i:R'_j\cap R_i\neq\nil\}=\{i:S'_j\cap S_i\neq\nil\}$$
for each $j=1,\dotsc,k$. Also, for sigma algebras $\A$ and $\B$, resp., and their sigma subalgebras, $\A'$ and $\B'$, resp. we say $(\A,\A')$ and $(\B,\B')$ are \textbf{similar}, written $(\A,\A')\sim(\B,\B')$, if 
$$(\atom A,\atom{\A'})\sim(\atom{B},\atom\B')$$
Now, we can rewrite \cite[Lemma 3.2]{rs} in sigma algebras:
\begin{lemma}\label{partitions in sigma algebras}
Let $\A$ and $\B$ be sigma algebras of groups $G$ and $H$, resp. Suppose $\g$ and $\h$ are generating sets of $G$ and $H$ such that $\con gA=\con hB$. If $\A'$ and $\B'$ are sigma sub-algebras of $\A$ and $\B$, such that $(\A,\A')\sim(\B,\B')$, then $\con g{A'}=\con h{\B'}$. 
\end{lemma}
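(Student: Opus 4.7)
The plan is to show that every configuration of $(\g,\A')$ arises by ``coarsening'' a configuration of $(\g,\A)$ along the similarity relation, and symmetrically on the $H$ side, so the equality $\con gA=\con hB$ transfers directly.

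First I would fix notation by writing $\atom A=\{A_1,\dotsc,A_m\}$, $\atom B=\{B_1,\dotsc,B_m\}$ (with $A_i\circledS B_i$ forced by the hypothesis $\con gA=\con hB$), and $\atom{\A'}=\{A'_1,\dotsc,A'_k\}$, $\atom{\B'}=\{B'_1,\dotsc,B'_k\}$. Since $\A'\subseteq\A$ and $\B'\subseteq\B$, each atom of $\A'$ is a disjoint union of atoms of $\A$; setting
$$I_j:=\{i: A_i\cap A'_j\neq\nil\},\qquad j=1,\dotsc,k,$$
the similarity assumption $(\A,\A')\sim(\B,\B')$ guarantees that the very same index sets $I_j$ satisfy $B'_j=\bigcup_{i\in I_j} B_i$, while $A'_j=\bigcup_{i\in I_j} A_i$.

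Next I would verify $\con g{A'}\subseteq\con h{B'}$. Take $C'=(c'_0,c'_1,\dotsc,c'_n)\in\con g{A'}$, witnessed by some $x\in G$ with $x\in A'_{c'_0}$ and $g_ix\in A'_{c'_i}$ for $i=1,\dotsc,n$. Because $\atom A$ refines $\atom{\A'}$, each element $g_ix$ lies in a unique atom of $\A$, say $A_{d_i}$, and by the decomposition of $A'_{c'_i}$ we must have $d_i\in I_{c'_i}$; similarly $d_0\in I_{c'_0}$. Thus $D:=(d_0,d_1,\dotsc,d_n)\in\con gA$. Using the hypothesis $\con gA=\con hB$, I obtain $D\in\con hB$, so there exists $y\in H$ with $y\in B_{d_0}$ and $h_iy\in B_{d_i}$ for each $i$. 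Since $d_i\in I_{c'_i}$ and $B'_{c'_i}=\bigcup_{d\in I_{c'_i}} B_d$, this forces $y\in B'_{c'_0}$ and $h_iy\in B'_{c'_i}$, i.e.\ $C'\in\con h{B'}$. The reverse inclusion $\con h{B'}\subseteq\con g{A'}$ follows by interchanging the roles of $G$ and $H$ in the same argument.

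There is no deep obstacle here; the whole content is the bookkeeping observation that the similarity relation provides a common index set $I_j$ describing how the coarser atoms split into finer ones on both sides, so that a witness of a configuration at the finer level on one side can be projected to the coarser level, transported by the given equality of configuration sets, and then read off at the coarser level on the other side. The mildly delicate point to be careful about is making sure the projection and the similarity are applied with matching indices; this is exactly what Notation \ref{notation2} and the definition of $\sim$ were set up to guarantee.
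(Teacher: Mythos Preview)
Your argument is correct and is exactly the intended one: the paper does not spell out a proof here but simply declares the lemma to be the sigma-algebra rewriting of \cite[Lemma 3.2]{rs}, and your coarsening-and-transport argument is precisely what that reference unpacks to. The only cosmetic point is that the paper's definition of similarity is phrased with the index $j$ running over the \emph{finer} atoms and recording which coarse atom contains each one, whereas you index by the coarse atoms and record which fine atoms they contain; these are inverse descriptions of the same data, so your claim that the index sets $I_j$ agree on both sides is equivalent to the stated similarity hypothesis.
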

The sigma algebraic version of \cite[Lemma 3.2]{rs1} is similarly obtained. 
\par By use of sigma algebras, equality of Tarski numbers of configuration equivalent groups, is easily obtained. Recall the following definition of paradoxical decomposition and Tarski number:
 \begin{definition}\label{Tarski numbers}
 A group $G$ admits a \textit{paradoxical decomposition} if there exist positive integers $m$ and $n$, disjoint subsets $P_1$, . . . , $P_m$, $Q_1$, . . . , $Q_n$ of $G$ and elements $x_1$, . . . , $x_m$, $y_1$, . . . , $y_n$ of $G$ such that
 $$ G=\bigcup_{i=1}^m x_iP_i=\bigcup_{j=1}^ny_jQ_i$$
 The minimal possible value of $m + n$ in a paradoxical decomposition of $G$ is the \textit{Tarski number} of $G$ and denoted by $\tau(G)$. If a group $G$ doesn't have a paradoxical decomposition, it means that $G$ is amenable; In this case we will define $\tau(G)$ to be $\infty$.\par
 \end{definition}
 \begin{theorem}
\label{Equality of tarski numbers}
Let groups $G$ and $H$ be configuration equivalent. Then $\tau(G)=\tau(H)$.
\end{theorem}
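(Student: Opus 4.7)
The plan is to transfer a minimal paradoxical decomposition from $G$ to $H$ via $G\approx H$, producing one in $H$ with the same total number of pieces, and then invoke symmetry. First, handle the amenable case: if $G$ is amenable then $\tau(G)=\infty$, and since amenability of finitely generated groups is precisely what Rosenblatt and Willis originally characterised via configurations (so it is preserved under $\approx$), $H$ is amenable too and $\tau(H)=\infty$. Therefore assume $\tau(G)=m+n<\infty$, witnessed by pairwise disjoint $P_1,\dots,P_m,Q_1,\dots,Q_n\subseteq G$ and elements $x_1,\dots,x_m,y_1,\dots,y_n\in G$ with $G=\bigcup_i x_iP_i=\bigcup_j y_jQ_j$.

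Next I would encode this decomposition as a configuration pair. Fix an ordered generating tuple $\g$ of $G$ containing every $x_i$ and every $y_j$, and let $\A$ be the (finite) sigma algebra generated by $\{P_i,Q_j,x_iP_i,y_jQ_j\}$. Applying $G\approx H$ produces a generating tuple $\h$ of $H$ of the same length and a sigma algebra $\B$ on $H$ with $\con gA=\con hB$. Write $\tilde x_i,\tilde y_j\in\h$ for the entries matching $x_i,y_j$, and for each relevant $S\in\A$ let $S'\in\B$ denote the corresponding element, i.e.\ the one with $S\circledS S'$.

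The key step is to apply Lemma \ref{important}(2) to the tautological identities $x_iP_i=x_iP_i$ and $y_jQ_j=y_jQ_j$ in $G$: this yields $\tilde x_iP_i'=(x_iP_i)'$ and $\tilde y_jQ_j'=(y_jQ_j)'$ in $H$. The remaining ingredients---that the $P_i,Q_j$ are pairwise disjoint and that $\bigcup_ix_iP_i=G=\bigcup_jy_jQ_j$---are assertions about the atomic structure of $\A$ (which atoms lie in which set, and whether a union of atoms exhausts $G$) and are transported to $\B$ by the $\circledS$-correspondence, because non-empty atoms must correspond to non-empty atoms: an empty atom of $\A$ would contribute no colour to any element of $\con gA$, and the same must then hold on the $\B$-side. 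Consequently the $P_i',Q_j'$ are pairwise disjoint in $H$ and $\bigcup_i\tilde x_iP_i'=H=\bigcup_j\tilde y_jQ_j'$, so $H$ admits a paradoxical decomposition with $m+n$ pieces; thus $\tau(H)\le\tau(G)$, and swapping the roles of $G$ and $H$ gives equality.

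The hardest part, I expect, is making the sigma-algebra ``dictionary'' between $\A$ and $\B$ fully rigorous at the level of unions of atoms: Lemma \ref{important} speaks only about a single generator acting on a single pair of corresponding sets, whereas the paradox requires several simultaneous covering and disjointness statements. One has to verify carefully that the $\circledS$-correspondence between atoms respects emptiness, and hence transfers identities such as ``this union of atoms is the whole group'' verbatim from $G$ to $H$; once this, together with Lemma \ref{partitions in sigma algebras}, is in place, the rest is bookkeeping.
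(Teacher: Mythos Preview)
Your proof is correct and follows essentially the same route as the paper: encode a minimal paradoxical decomposition in a finite sigma algebra, enlarge the generating set to contain the translating elements, transfer via $G\approx H$, and invoke Lemma~\ref{important}(2) on each identity $x_iP_i=x_iP_i$, $y_jQ_j=y_jQ_j$. The paper throws in all cross-translates $x_rP_i$, $y_sQ_j$ when generating $\A$, while you use only the diagonal ones, but this makes no difference; your worry about ``respecting emptiness'' is harmless since atoms of a finite sigma algebra are by definition non-empty and every atom colour actually occurs as a $c_0$ in some configuration, so the $\circledS$-dictionary transports disjointness and covering verbatim.
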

\begin{proof}
If $G$ is amenable, so is $H$, and we have done. Hence without loss of generality, suppose that $\tau(G)$ is finite. Fix a generating set of $G$, say $\g_0$. Let $m$ and $n$, $(P_1,\dotsc,P_m)$, $(Q_1,\dotsc,Q_n)$ and elements $\mathfrak x:=(x_1,\dotsc,x_m)$, $\mathfrak y:=(y_1,\dotsc,y_n)$ be as in Definition \ref{Tarski numbers}. Consider the sigma algebra $\A$, generated by sets
$$P_i,\, x_rP_i,\, Q_j,\, y_sQ_j,\quad i,r=1,\dotsc,m,\, j,s=1,\dotsc,n$$
Set $\g:=\g_0\oplus\mathfrak{x}\oplus\mathfrak{y}$. Since $G\approx H$, there exists a generating set $\h=\h_0\oplus\mathfrak{u}\oplus\mathfrak{v}$ and sigma algebra $\B$ of $G$, such that 
$$\con gA=\con hB$$
where $\h_0$, $\mathfrak{u}:=(u_1,\dotsc,u_m)$, $\mathfrak{v}:=(v_1,\dotsc,v_n)$ are corresponded to $\g_0$, $\mathfrak{x}$ and $\mathfrak{y}$, resp. Suppose that $C_i$, $D_j$ in $\B$ are such that $C_i\circledS P_i$ and $D_j\circledS Q_j$, $i=1,\dotsc,m$, $j=1,\dotsc,m$. Therefore, Lemma \ref{important} leads to $u_rC_i\circledS x_rP_i$ and $v_sD_j\circledS y_sQ_j$. So, $\mathfrak{u}$, $\mathfrak{v}$, $C_i$'s and $D_j$'s satisfy the conditions of Definition \ref{Tarski numbers}, and hence, $\tau(H)\leq\tau(G)$. By the symetry, we have also $\tau(G)\leq\tau(H)$, which completes the proof.
\end{proof}
We say that a group $G$ admits a $G=nG$ decomposition, $n\in\N$, if there exists disjoint sets 
\[ \{P_{i,j}: i=1,\dotsc,n,\,j=1,\dotsc,k(i)\}\]
along with elements $\mathfrak{x}^{(i)}=(x_{i,1},\dotsc,x_{i,k(i)})$, $i=1,\dotsc,n$, of $G$, such that 
$$ G=\bigcup_{j=1}^{k(i)}x_{i,j}P_{i,j}\quad(i=1,\dotsc,n)$$
So, paradoxical decomposition of a group $G$, is a $G=2G$ decomposition. If a group $G$, admits a $G=nG$ decomposition, we denote the minimal amount of $\sum_{i=1}^n k(i)$ by $\tau_n(G)$. Using the method of proof of the above theorem, we can easily show that
\begin{theorem}
Let $G$ and $H$ be two configuration equivalent group. If $G$ admits a $G=nG$ decomposition, $n\in\N$, then $H$ also admits a $H=nH$ decomposition, and $\tau_n(G)=\tau_n(H)$.
\end{theorem}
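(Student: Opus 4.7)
The plan is to mimic the proof of Theorem \ref{Equality of tarski numbers} verbatim, replacing the two families of sets in a paradoxical decomposition by $n$ families. First I would fix a generating set $\g_0$ of $G$, take witnesses $\{P_{i,j}\}$ and $\mathfrak{x}^{(i)}=(x_{i,1},\dotsc,x_{i,k(i)})$ realizing $\tau_n(G)$, and form the combined ordered generating set
$$\g:=\g_0\oplus\mathfrak{x}^{(1)}\oplus\dotsb\oplus\mathfrak{x}^{(n)}.$$
Let $\A$ be the finite sigma algebra of $G$ generated by the $P_{i,j}$ together with all $x_{i,j}P_{i,j}$. Since $G\approx H$, there exist a generating set $\h=\h_0\oplus\mathfrak{y}^{(1)}\oplus\dotsb\oplus\mathfrak{y}^{(n)}$ of $H$ (with $\mathfrak{y}^{(i)}=(y_{i,1},\dotsc,y_{i,k(i)})$ corresponding positionally to $\mathfrak{x}^{(i)}$) and a sigma algebra $\B$ of $H$ with $\con gA=\con hB$.

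Next, for each $P_{i,j}\in\A$ I would pick $Q_{i,j}\in\B$ with $Q_{i,j}\circledS P_{i,j}$, which is possible since the coloring of the two Cayley graphs sets up a bijection between $\atom{A}$ and $\atom{B}$. Applying Lemma \ref{important}(2) to the relations $x_{i,j}P_{i,j}\in\A$ yields $y_{i,j}Q_{i,j}\circledS x_{i,j}P_{i,j}$ for every $i,j$. Disjointness of the family $\{y_{i,j}Q_{i,j}\}$ and the covering identities $H=\bigcup_j y_{i,j}Q_{i,j}$ now follow automatically from $\circledS$: an intersection or complement of sets in $\A$ is trivial if and only if its counterpart in $\B$ is trivial, because this property is determined atom by atom. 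Hence the $\mathfrak{y}^{(i)}$ and $Q_{i,j}$ form an $H=nH$ decomposition with $\sum_i k(i)$ pieces, so $\tau_n(H)\le\tau_n(G)$. Swapping the roles of $G$ and $H$ gives the reverse inequality.

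The only mildly delicate point, and the one I would write out carefully, is the transfer of the covering identity $G=\bigcup_j x_{i,j}P_{i,j}$ and of the pairwise disjointness of the $P_{i,j}$ to their counterparts in $H$. This is where we need the sigma algebra $\A$ to contain not only the $P_{i,j}$ but also the translates $x_{i,j}P_{i,j}$, so that all sets entering the defining equations of the $G=nG$ decomposition are genuinely elements of $\A$; then the argument of the preceding theorem applies unchanged. No new ideas beyond those in Theorem \ref{Equality of tarski numbers} are required.
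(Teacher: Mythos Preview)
Your proposal is correct and follows exactly the approach the paper intends: the paper gives no detailed proof here, merely stating that the result follows ``using the method of proof of the above theorem,'' which is precisely what you carry out. One small slip worth fixing in your write-up: it is the family $\{Q_{i,j}\}$, not $\{y_{i,j}Q_{i,j}\}$, whose pairwise disjointness you must transfer, since the definition of an $H=nH$ decomposition requires the pieces themselves (not their translates) to be disjoint.
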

Subgroups of (two-sided) configuration equivalent groups have not been studied yet. However, By the use of  Tarski number, some pieces of information about non-Abelian free subgroups can be obtained. By a theorem of J{\'o}nsson and Dekker (see, for example, \cite[Theorem 5.8.38]{M.sapir}), $\tau(G) = 4$ if and only if $G$ contains a non-Abelian free subgroup. This and \cite[Theorem 1]{try}, implies that if groups $G$ and $H$ are configuration equivalent, then they both have non-Abelian free subgroups or not. In fact, we have more, and before stating the result, some reminders are needed; The power function for a polynomial type group $G$ is a function $\varsigma$ on $G$ with following properties:
\begin{enumerate}\label{1to3}
\item $\varsigma(G)$ is a finite set, and 
\item $\varsigma^{-1}(\varsigma(e_G))=\{e_G\}$.
\end{enumerate}
We associate a (finite) partition with a power function, called $\varsigma$--partition of $G$ consisting of disjoint subsets, $E(\varsigma(g)):=\varsigma^{-1}(\varsigma(g))$, $g\in G$. 
\begin{enumerate}
\item[(3)] There is a generating set $\g$ of $G$ which, for each $g\in G$, we can find a representative pair $(J_g,\sigma_g)$ in $\g$ such that if $\con gE=\con hF$, then 
$$W(J_g,\sigma_g;\h) F(\varsigma(e_G))\subseteq F(\varsigma(W(J_g,\sigma_g;\g))$$
where $F(\varsigma(g))$ considered to be the same as $E(\varsigma(g))$ in color. 
\end{enumerate}
By use of the power function, we can speak about some subgroups of configuration equivalent groups; 
  \begin{theorem}
Let two groups $G$ and $H$ be configuration equivalent. Suppose that $G$ contains a non-Abelian free subgroup of rank $n$, $n\in\mathbb N$. Then $H$ contains a non-Abelian free subgroup of rank $n$.
\end{theorem}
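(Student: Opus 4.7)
The plan is to reduce the theorem to the case $n=2$ and then invoke the equality of Tarski numbers together with the J\'onsson-Dekker characterization that is cited just above the statement.

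First, I would observe that a free group of rank $1$ is simply $\mathbb Z$ and is Abelian, so the statement is non-vacuous only for $n \ge 2$; I restrict to this range. A standard result in combinatorial group theory is that the commutator subgroup of $\mathbb F_2$ is itself free of countably infinite rank, and in particular $\mathbb F_2$ contains a subgroup isomorphic to $\mathbb F_n$ for every $n \ge 2$. Consequently, for any group $K$, the conditions \enquote{$K$ contains $\mathbb F_n$ for some $n \ge 2$}, \enquote{$K$ contains $\mathbb F_2$} and \enquote{$K$ contains $\mathbb F_n$ for every $n \ge 2$} are mutually equivalent, so proving the theorem for any single value $n \ge 2$ suffices.

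Next, I would assemble the chain of equivalences using the two ingredients already in place. By the J\'onsson-Dekker theorem cited in the paragraph preceding the statement, a group $K$ contains a non-Abelian free subgroup if and only if $\tau(K) = 4$. Theorem~\ref{Equality of tarski numbers} asserts $\tau(G) = \tau(H)$ whenever $G$ and $H$ are configuration equivalent. Chaining these:
\[
G \text{ contains } \mathbb F_n \iff \tau(G) = 4 \iff \tau(H) = 4 \iff H \text{ contains } \mathbb F_n,
\]
which is the desired conclusion.

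There is essentially no hard step, since the substantive work has been packaged into Theorem~\ref{Equality of tarski numbers}, whose own proof used the sigma-algebraic transfer principle of Lemma~\ref{important} to lift a paradoxical decomposition of $G$ to one of $H$. The only elementary ingredient left to verify is the embedding $\mathbb F_n \hookrightarrow \mathbb F_2$ for $n \ge 2$, which is classical and does not require any appeal to the power-function machinery developed above (that machinery will be needed for the finer subgroup results to follow, but not for this theorem).
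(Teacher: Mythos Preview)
Your proof is correct, but it follows a genuinely different route from the paper's own argument. The paper does not use the Tarski-number shortcut for this theorem; instead it invokes the power-function machinery directly. Concretely, the paper fixes a generating set $\g$ of $G$, picks a generating set $\g_0$ of the free subgroup $G_0\cong\mathbb F_n$, and lets $\A$ be the sigma algebra generated by the sets $E(\varsigma(g))$ of a $\varsigma$-partition of $G_0$. From an equality $\Con(\g_0\oplus\g,\A)=\Con(\h_0\oplus\h,\B)$ and property~(3) of the power function, it deduces that $W(J,\rho;\h_0)=e_H$ if and only if $W(J,\rho;\g_0)=e_G$ for every representative pair $(J,\rho)$, so $\langle\h_0\rangle$ is itself free of rank $n$.

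The trade-off is as follows. Your argument is shorter and uses only Theorem~\ref{Equality of tarski numbers}, the J\'onsson--Dekker characterization (both already available at this point of the paper), and the classical embedding $\mathbb F_n\hookrightarrow\mathbb F_2$; indeed, the paragraph preceding the theorem already notes that $\tau(G)=\tau(H)$ yields the qualitative statement \enquote{both contain a non-Abelian free subgroup or neither does}, and your observation about $\mathbb F_n\hookrightarrow\mathbb F_2$ shows that this qualitative statement is already the full theorem. The paper's approach, by contrast, is constructive---it exhibits an explicit rank-$n$ free subgroup of $H$ generated by the tuple $\h_0$ matched to $\g_0$---and it serves as a warm-up for the power-function transfer used again in Theorem~\ref{polysubgroups}. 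Your remark that the power-function machinery is not needed here is accurate; the paper simply chose to illustrate it on this example.
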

\begin{proof}
Fix a generating set of $G$, say $\g$. Let $\g_0$ be a generating set of a subgroup $G_0$ of $G$, which is free of rank $n$. Assume that $\varsigma$ is the power function, and $\E_0$ is a $\varsigma$-partition of $G_0$. Assume $\A$ is the sigma algebra generated by set $E(\varsigma(g))$, $g\in G_0$. Now, if 
$\Con(\g_0\oplus\g,\A)=\Con(\h_0\oplus\h,\B)$, and if we denote the set $F$ in $\F$, where $F\circledS E(\varsigma(g))$, by $F(\varsigma(g))$, then one can easily check that, for an arbitrary representative pair in $\g_0$, say $(J,\rho)$, we have 
$$W(J,\rho;\h_0)F(\varsigma(e_G))\subseteq F(\varsigma(W(J,\rho;\g_0))$$
Hence, $W(J,\rho;\h_0)=e_H$ if and only if $W(J,\rho;\g_0)=e_G$, and this means $H_0:=\langle\h_0\rangle$, is a subgroup of $H$, which is free of rank $n$.
\end{proof}
In the following conjugacy classes are involved; For an element $g$ in a group $G$, we denote the conjugacy class of $g$ by $\Cl g$. It is obvious that a normal subset (see \cite[Definition 2.1.]{rs1}) of $G$ is nothing but a disjoint union of conjugacy classes of $G$. We will show in the following theorem that, if two groups $G$ and $H$ are two-sided configuration equivalent, then their class numbers will be the same:
 \begin{theorem}
 \label{class number}
 Let finitely generated groups $G$ and $H$ be two-sided configuration equivalent. Then their class numbers are the same. 
 \end{theorem}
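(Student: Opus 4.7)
The plan is to reduce the problem to a counting argument by first characterizing, in purely two-sided-configurational language, when a partition of a group consists of normal subsets, and then transporting the conjugacy-class partition across $\teq$.

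The key lemma I would prove first reads: for a configuration pair $(\g,\E)$ of $G$ with $\g=(g_1,\dotsc,g_n)$, the atoms of $\E$ are all normal subsets of $G$ if and only if every $C=(c_0,c_1,\dotsc,c_{2n})\in\tcon gE$ satisfies $c_i=c_{i+n}$ for each $i=1,\dotsc,n$. The forward implication is immediate because $xg_i=g_i^{-1}(g_ix)g_i$ shows $g_ix$ and $xg_i$ to be conjugate in $G$, so they share an atom when atoms are normal. The converse reads the same identity (with $y=g_ix$) as saying that $y$ and $g_i^{-1}yg_i$ lie in the same atom for every $y\in G$ and every generator $g_i$; since $\g$ generates $G$, each atom is then invariant under conjugation by every element of $G$ and hence a normal subset.

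Granting this characterization, suppose first that the class number $k$ of $G$ is finite and let $\E$ be the partition of $G$ into its $k$ conjugacy classes. Choose any generating set $\g$ of $G$ and apply $G\teq H$ to obtain a pair $(\h,\F)$ with $\tcon gE=\tcon hF$; then $\F$ has $k$ atoms, each non-empty (for every $i$ the color of $E_i$ appears as $c_0$ in some element of $\tcon gE$, hence in $\tcon hF$), and each a normal subset of $H$ by the key lemma applied to $\tcon hF$. Since every non-empty normal subset is a disjoint union of conjugacy classes, $H$ has at least $k$ conjugacy classes. For the reverse inequality I argue by contradiction: if $H$ had at least $k+1$ conjugacy classes, partition $H$ into $k+1$ non-empty normal subsets by taking $k$ distinct conjugacy classes and lumping the rest into a single normal set, then transport this partition back to $G$ via $\teq$ to obtain a partition of $G$ into $k+1$ non-empty normal atoms, contradicting that $G$ has exactly $k$ conjugacy classes. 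The case when $G$ has infinitely many classes is symmetric: if $H$ had finitely many, the finite argument applied with $G$ and $H$ swapped would yield a contradiction, and since $G$ and $H$ are finitely generated and hence countable, both class numbers are then $\aleph_0$.

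The main obstacle is the converse half of the key lemma, where conjugation-invariance of the atoms must be bootstrapped from $c_i=c_{i+n}$ up to all of $G$ — the only step where the generating hypothesis on $\g$ is used essentially. Once the lemma is in hand, the remainder is routine transport of partitions across $\teq$ (in the spirit of Lemma \ref{important}) combined with a pigeonhole count.
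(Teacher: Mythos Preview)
Your proof is correct, but the route differs from the paper's. The paper does not isolate your key lemma (the equivalence between ``all atoms normal'' and the symmetry condition $c_i=c_{i+n}$ for every two-sided configuration). Instead, for each $N\in\mathbb N$ it fixes $N$ disjoint conjugacy classes $\Cl{x_i}$, enlarges the sigma algebra to contain both $\Cl{x_i}$ and the right-translates $\Cl{x_i}g_j^{-1}$, and then appeals to Lemma~\ref{important} (in its left and right-multiplication forms) to force the corresponding sets $B_i$ in $H$ to satisfy $h_jB_ih_j^{-1}=B_i$ for every $j$, hence to be normal. This yields ``class number of $H$ is at least $N$'' for every $N$, which handles the finite and infinite cases in one stroke. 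Your approach trades that enlargement for an intrinsic criterion read off the configuration set itself; the payoff is that you never have to adjoin auxiliary translates to the partition, and your key lemma is a clean standalone statement. The cost is a mild case split (finite vs.\ infinite class number) where the paper's ``$\geq N$ for every $N$'' argument is uniform. Both arguments ultimately rest on the same mechanism---conjugation by the generators $g_i$ preserves atoms, then bootstrap to all of $G$---but yours packages it more transparently.
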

  \begin{proof}
Let $\g=(g_1,\dotsc,g_n)$ be a generating set of $G$. Suppose the class number of $G$ is at least $N$, $N\in\mathbb N$. So, there are elements $x_i$, $i=1,\dotsc, N$ in $G$, such that $\Cl{x_i}$, $i=1,\dotsc, N$, are pairwise disjoint. Consider a finite sigma algebra $\A$ of $G$, containing the following sets, 
$$\Cl{x_i}, \Cl{x_i}g_j^{-1}, \quad i=1,\dotsc,N,\, j=1,\dotsc,n$$
Assume $\B$ is a sigma algebra of $G$ along with a generating set $\h$, such that $\tcon gA=\tcon hB$. If elements $B_i$ in $\B$, $i=1,\dotsc,N$ are such that $B_i\circledS\Cl{x_i}$, then we get by Lemma \ref{important} that $B_i$, $i=1,\dotsc,N$, are normal sets, so the class number of $H$ is at least $N$. This completes the proof.
\end{proof}
It is obvious that $\Cl x=\{x\}$ if and only if $x$ is a central element. Suppose that the class number of $G$ is finite, this implies that $Z(G)$ is a finite subgroup of $G$, where $Z(G)$ stands for the center of $G$. In the below lemma, by using of the finiteness of the class number, we present a certain type of configuration pair, which make it possible to study subgroups  more efficiently:
\begin{lemma}
\label{C.C.P}
Let $G$ be a finitely generated group with finite class number. Fix a generating set $\g$ of $G$. Then there is a partition $\E$ of $G$, containing $\{e_G\}$, such that if $\tcon gE=\tcon hF$ for a configuration pair $(\h,\F)$ of a group $H$, and if $\{e_G\}\circledS F\in\F$, then $F$ will be singleton, and without loss of generality, we can assume that $F=\{e_H\}$. 
\end{lemma}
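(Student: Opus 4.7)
The plan is to construct $\E$ so that $\{e_G\}$ is an atom and so that the two-sided analog of Lemma \ref{important} combined with Theorem \ref{class number} pins down the structure of $\F$. Since $G$ has finite class number $k$, let the conjugacy classes be $C_1 = \{e_G\}, C_2, \ldots, C_k$. I would take $\E$ to be the partition generated by the finite family $\{C_i\}_{i=1}^k \cup \{C_i g_j^{-1}\}_{i,j}$ (mirroring the sigma algebra used in the proof of Theorem \ref{class number}), whose atoms are the intersections of these sets with complements of the others. Because $\{e_G\}$ is already a singleton and is one of the $C_i$, it remains an atom of $\E$.

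The first step is to show that each atom of $\F$ matched with a conjugacy class $C_i$ is itself a single conjugacy class of $H$. This uses the two-sided analog of Lemma \ref{important} applied to the set equalities $g_j\cdot(C_i g_j^{-1}) = C_i$, exactly in the way that yields normality of the matched atoms in the proof of Theorem \ref{class number}. Combined with the fact (by Theorem \ref{class number}) that $H$ has class number $k$ and is partitioned by $\F$ into finitely many nonempty disjoint normal subsets, the atoms corresponding to the $C_i$'s are each a single conjugacy class of $H$; in particular $F\circledS\{e_G\}$ equals $\Cl{y}$ for some $y\in H$.

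The main obstacle is forcing $F$ to be a singleton, equivalently $y\in Z(H)$. Finite class number gives $|Z(G)|=r<\infty$; write $Z(G)=\{e_G,z_2,\dots,z_r\}$. The $r$ singleton atoms $\{e_G\},\{z_2\},\dots,\{z_r\}$ of $\E$ are each matched with a conjugacy class $F_{z_j}$ of $H$, and I would aim to show $\bigsqcup_{j=1}^r F_{z_j}\subseteq Z(H)$, after which each $F_{z_j}$ (being a conjugacy class lying in the center of $H$) is automatically a singleton. The difficulty is promoting the common-atom membership of $h_i y$ and $yh_i$ (which follows directly from the two-sided configuration of $x_0=z_j$ together with the centrality $g_iz_j=z_jg_i$ in $G$) into the pointwise equality $h_iy=yh_i$ for each $y\in F_{z_j}$. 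I expect to handle this by further enriching $\E$ so that the finite set $\g\cdot Z(G)$ is separated from its conjugates within the relevant atoms and then propagating the refinement through Lemma \ref{partitions in sigma algebras}, so that on the $H$-side the matched atoms containing the products $h_iy$ become singletons.

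Finally, once $F=\{y\}$ has been established, the right-translated partition $\F\cdot y^{-1}=\{F_j y^{-1}\}_j$ has the same two-sided configuration set as $\F$: centrality of $y\in Z(H)$ gives $h_i(x_0 y)=(h_ix_0)y$ and $(x_0 y)h_i=(x_0 h_i)y$ for all $x_0\in H$ and all $h_i\in\h$, so the colors recorded by each two-sided configuration tuple are unchanged. Since $F\cdot y^{-1}=\{e_H\}$, this yields the \enquote{without loss of generality} clause of the statement.
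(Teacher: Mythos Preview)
Your step 3 is where the argument stalls, and the remedy you sketch does not close it. Refining $\E$ so that each product $g_i z_j$ sits in its own atom on the $G$-side does nothing to force the matched atom on the $H$-side to be a singleton; that is exactly the conclusion you are trying to establish for the atom $\{e_G\}$ itself, so the plan is circular. After any such refinement, the equality of two-sided configuration sets still only tells you that $h_i y$ and $y h_i$ lie in the \emph{same atom} of $\F$; without an independent reason that atom has one element you cannot conclude $h_i y = y h_i$.

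The paper bypasses this difficulty by working on the non-central side rather than the central one. In addition to the conjugacy classes and their right translates, it throws into the generating family a chosen representative $\{x_j\}$ and its complement $\Cl{x_j}\setminus\{x_j\}$ for every \emph{non-central} class $\Cl{x_j}$ (which has at least two elements). Thus each non-central class of $G$ is split into at least two atoms of $\A$, and hence the matched normal set $L_j\circledS\Cl{x_j}$ in $H$ is also a union of at least two atoms, forcing $\lvert L_j\rvert\geq 2$. So every $L_j$ is a non-central class of $H$, and since (via the class-number count of Theorem~\ref{class number}) the $K_i$'s and $L_j$'s exhaust the conjugacy classes of $H$, the paper concludes that the remaining $K_i\circledS\{z_i\}$ are the central, hence singleton, classes. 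Your partition, generated only by the $C_i$ and the $C_i g_j^{-1}$, leaves every $C_i$ as a single block and therefore carries no mechanism for detecting $\lvert L_j\rvert\geq 2$ on the $H$-side; this splitting is the missing idea.

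Your final paragraph, handling the ``without loss of generality $F=\{e_H\}$'' clause via the central translate $\F\mapsto\F y^{-1}$, is correct and in fact more explicit than the paper's one-line remark.
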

\begin{proof}
Suppose that $\g=(g_1,\dotsc,g_n)$, and $Z(G)=\{z_0=e_G,z_1,\dotsc,z_r\}$, Let elements $x_1,\dotsc,x_s\in G$ be such that 
$$\Cl{z_i},\Cl{x_j},\quad i=0,1,\dotsc,r,\, j=1,\dotsc,s$$
are all of the conjugacy classes. Now, consider the sigma algebra $\A$ generated by the following subsets of $G$, 
$$
\{z_i\}, \{z_ig_k^{-1}\}, \{x_j\},\Cl{x_j}\setminus\{x_j\}, \Cl{x_j}g_k^{-1}
$$
for $i=0,1,\dotsc,r$, $j=1,\dotsc,s$, and $k=1,\dotsc,n$. If $\tcon gA=\tcon hB$ for a sigma algebra $\B$ and a generating set $\h$ of a group $H$, and if $K_i$ and $L_j$ in $\B$ are such that 
$$K_i\circledS\{z_i\}, L_j\circledS\Cl{x_j}\quad i=0,1,\dotsc,r,\, j=1,\dotsc,s$$ 
 These sets are all normal (see Lemma \ref{important}), so by Theorem \ref{class number}, there are elements $w_i$, and $y_j$, in $H$ such that 
$$K_i=\Cl{w_i}, L_j=\Cl{y_j}, \quad i=0,1,\dotsc,r,\, j=1,\dotsc,s$$
Since sets $\Cl{y_j}$ can be written as a union of at least two atoms, so $w_i$ are all central, and without loss of generality we can assume $w_0=e_H$. Therefore $\F=\atom B$ works well. 
\end{proof}
By the power function of polynomial type groups, we can say that in two-sided configuration equivalent groups with finite class number, the polynomial type subgroups are isomorphic:
\begin{theorem}
\label{polysubgroups}
Let two groups $G$ and $H$ be two-sided configuration equivalent. Assume that these groups have finite class number. Then for each polynomial type subgroup, $G_0$ of $G$, $H$ contains a subgroup $H_0$, which is isomorphic to $G_0$. Furthermore $G$ and $H$ have isomorphic centers.
\end{theorem}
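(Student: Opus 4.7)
My plan is to emulate the proof of the preceding free-subgroup theorem in the two-sided configuration setting, using Lemma \ref{C.C.P} to secure the matching of identity elements; the assertion on centers will then follow by applying the first part to the finite, hence polynomial-type, group $G_0=Z(G)$.

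First, I would fix a generating set $\g$ of $G$ extending both a generating set $\g_0$ of the polynomial-type subgroup $G_0$ and any elements demanded by Lemma \ref{C.C.P}. That lemma produces a partition of $G$ with $\{e_G\}$ among its atoms, having the property that whenever a configuration pair of $H$ matches it, the atom paired with $\{e_G\}$ is the singleton $\{e_H\}$. I would then enlarge this to the sigma algebra $\A$ also containing the $\varsigma$-partition sets $E(\varsigma(g))$, $g\in G_0$, where $\varsigma$ is a power function of $G_0$. Two-sided equivalence yields a generating set $\h$ of $H$, containing a sub-tuple $\h_0$ corresponding to $\g_0$, and a sigma algebra $\B$ with $\tcon gA=\tcon hB$; by construction $\{e_G\}\circledS\{e_H\}$, and I write $F(\varsigma(g))$ for the atom of $\B$ similar to $E(\varsigma(g))$.

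Applying property (3) of the power function, exactly as in the preceding free-subgroup theorem, gives for every representative pair $(J,\rho)$ in $\g_0$ the inclusion
$$W(J,\rho;\h_0)\,F(\varsigma(e_G))\subseteq F(\varsigma(W(J,\rho;\g_0))).$$
Since $F(\varsigma(e_G))=\{e_H\}$ and distinct atoms of $\B$ are disjoint, this upgrades to the biconditional $W(J,\rho;\g_0)=e_G$ iff $W(J,\rho;\h_0)=e_H$: the forward direction forces $W(J,\rho;\h_0)\in\{e_H\}$; the converse notes that $e_H\in F(\varsigma(W(J,\rho;\g_0)))$ pins that atom to $\{e_H\}=F(\varsigma(e_G))$, so similarity of atoms gives $E(\varsigma(W(J,\rho;\g_0)))=\{e_G\}$ and then property (2) yields $W(J,\rho;\g_0)=e_G$. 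Thus $g_i\mapsto h_i$ extends to an isomorphism from $G_0$ onto $H_0:=\langle\h_0\rangle$.

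For the center, Theorem \ref{class number} together with the matching of singleton conjugacy classes under Lemma \ref{important} shows that $Z(G)$ and $Z(H)$ are finite of equal order. Since finite groups are polynomial type, the first part applied to $G_0=Z(G)$ yields $H_0\le H$ with $H_0\cong Z(G)$. To promote this to $H_0=Z(H)$, I would further adjoin each singleton $\{g_i\}$, $g_i\in\g_0\subseteq Z(G)$, to $\A$; Lemma \ref{important}(2) with $A_1=\{e_G\}$, $A_2=\{g_i\}$ and $g=g_i$ identifies the matching atom in $\B$ as $\{h_i\}$, and the normality argument of Lemma \ref{C.C.P} shows this singleton coincides with $\Cl{h_i}$, so $h_i\in Z(H)$. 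Hence $H_0\subseteq Z(H)$, and combined with $|H_0|=|Z(H)|$ this gives $H_0=Z(H)\cong Z(G)$. The main difficulty I anticipate is justifying that the power-function inclusion holds for arbitrary representative pairs, not merely the specific $(J_g,\sigma_g)$ supplied by property (3), in the two-sided setting; this is the same extrapolation the paper takes for granted in the preceding free-subgroup theorem, and once it is in hand the rest reduces to careful bookkeeping with Lemma \ref{important} and the similarity of atoms.
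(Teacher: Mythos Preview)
Your proposal is correct and follows essentially the same route as the paper: combine the partition from Lemma \ref{C.C.P} (to pin $\{e_G\}\circledS\{e_H\}$) with the $\varsigma$-partition of $G_0$, then invoke property (3) to obtain the word-equivalence $W(J,\rho;\g_0)=e_G\iff W(J,\rho;\h_0)=e_H$ and hence $G_0\cong H_0$. Your derivation of the biconditional is in fact more carefully argued than the paper's one-line ``one can easily see'', and the gap you flag about extrapolating property (3) from the distinguished pairs $(J_g,\sigma_g)$ to arbitrary $(J,\rho)$ is indeed glossed over in the paper as well.

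For the center, your argument differs slightly in organization: the paper simply adds $Z(G)$ to $\g$ and says ``repeating the proof of \ref{C.C.P}'' yields $Z(G)\cong Z(H)$, whereas you first establish $|Z(G)|=|Z(H)|$, then apply the first part to $G_0=Z(G)$, and finally verify $H_0\subseteq Z(H)$ via the singleton/normality argument. Both approaches rest on the same mechanism (the matching of central singletons forced by the class-number count in Lemma \ref{C.C.P}), and yours makes the logical dependencies more transparent.
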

\begin{proof}
Fix a generating set $\g$ for $G$. Let $\varsigma$, $\E_0$ and $\g_0$ be such that the properties (1) to (3) on page in page \pageref{1to3}, are established for $G_0$. Applying Lemma \ref{C.C.P}, we get a partition $\E$ of $G$. Now. if 
$$\Con_{\text t}(\g_0\oplus\g,\sigma(\E_1\cup\E))=\Con_{\text t}(\h_0\oplus\h,\B)$$
 by Lemma \ref{C.C.P}, property (3), one can easily see that $G_0\cong H_0$, where $H_0:=\langle \h_0\rangle$.\par 
Since $Z(G)$ becomes a finite subgroup, for $G$ has a finite class number, by adding it to $\g$, repeating the proof of \ref{C.C.P}, one can easily obtain that $Z(G)\cong Z(H)$.
\end{proof}
It is of interest to study finitely generated groups with finite class numbers. For each large prime number $p$, there exists a 2-generated infinite group of exponent $p$ which has exactly $p$ conjugacy classes (that's \cite[Theorem 41.2]{Olshansky}). Osin \cite{osin} has recently constructed a finitely generated example (that was a major breakthrough since the problem was open for more than 60 years). All of these groups are infinitely presented and it is still an open problem, i.e., the question whether there is an infinite finitely presented group with a finite class number. If such a group exists, the following theorem will make sense:
\begin{theorem}
\label{f.p with finite class number} 
Let finitely presented group $G$ has finite class number. Suppose that $G\approx H$, which $H$ is a finitely generated group. Then $H$ is a quotient of $G$.  
\end{theorem}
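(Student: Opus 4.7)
The plan is to exhibit, for a chosen finite presentation $G = \langle g_1,\dotsc,g_n \mid r_1,\dotsc,r_m \rangle$, a generating set $\h = (h_1,\dotsc,h_n)$ of $H$ in which every relator of $G$ becomes trivial; the assignment $g_k \mapsto h_k$ then extends to a surjective homomorphism $G \twoheadrightarrow H$, exhibiting $H$ as a quotient. Write each relator as $r_i = W(J_i,\rho_i;\g)$ for a representative pair $(J_i,\rho_i)$ of length $p_i$. Because $G$ has finite class number, Lemma \ref{C.C.P} furnishes a partition $\E_0$ of $G$ containing $\{e_G\}$ with the singleton-preservation property. (The strategy genuinely needs two-sided equivalence for Lemma \ref{C.C.P} to fire; so the ``$\approx$'' in the statement should be read as ``$\approx_\text{t}$''.)

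Next, enrich $\E_0$ by the intermediate partial products. Viewing left multiplication by $W$ on $e_G$ as successive left multiplications starting from the innermost letter, set
\[
w_k^{(i)} \;:=\; g_{J_i(p_i-k+1)}^{\rho_i(p_i-k+1)}\cdots g_{J_i(p_i)}^{\rho_i(p_i)}, \qquad k = 0,1,\dotsc,p_i,
\]
so $w_0^{(i)} = e_G$, $w_{p_i}^{(i)} = W(J_i,\rho_i;\g) = e_G$, and $w_k^{(i)} = g_{J_i(p_i-k+1)}^{\rho_i(p_i-k+1)}\,w_{k-1}^{(i)}$. Let $\A$ be the sigma-algebra generated by $\E_0$ together with all the singletons $\{w_k^{(i)}\}$. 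Applying $G \approx_\text{t} H$ to $\A$ gives a generating set $\h$ of $H$ (with entries paired to those of $\g$) and a sigma-algebra $\B$ of $H$ with $\tcon gA = \tcon hB$; let $A_k^{(i)} \in \B$ denote the atom with $A_k^{(i)} \circledS \{w_k^{(i)}\}$. Passing through the sub-sigma-algebra $\sigma(\E_0)$ by the two-sided analogue of Lemma \ref{partitions in sigma algebras}, Lemma \ref{C.C.P} gives $A_0^{(i)} = \{e_H\}$.

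Now chain Lemma \ref{important} along each relator. The single-letter identity $g_{J_i(p_i-k+1)}^{\rho_i(p_i-k+1)} \cdot \{w_{k-1}^{(i)}\} = \{w_k^{(i)}\}$ together with $A_{k-1}^{(i)} \circledS \{w_{k-1}^{(i)}\}$ and $A_k^{(i)} \circledS \{w_k^{(i)}\}$ yields, via part (2) of Lemma \ref{important}, the equality $h_{J_i(p_i-k+1)}^{\rho_i(p_i-k+1)} \cdot A_{k-1}^{(i)} = A_k^{(i)}$; negative exponents are handled by swapping roles using $g^{-1}A=B \Longleftrightarrow gB=A$. Inductively each $A_k^{(i)}$ is a singleton, and $A_{p_i}^{(i)} = \{W(J_i,\rho_i;\h)\}$. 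But $A_{p_i}^{(i)} \circledS \{w_{p_i}^{(i)}\} = \{e_G\}$ forces $A_{p_i}^{(i)} = \{e_H\}$, so $W(J_i,\rho_i;\h) = e_H$ for every $i$, and $g_k \mapsto h_k$ extends to a surjection $G \twoheadrightarrow H$.

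The main obstacle is the sigma-algebra bookkeeping: one must seed $\A$ with every partial product $w_k^{(i)}$ as a distinct atom and engineer the generating set so that each single-letter transition in each relator is directly readable from the configuration, which requires carrying $\g$ (and implicitly its inverses, through the role-swap trick above) inside the generating set used to form configurations. A secondary delicate point is the notational slip between $\approx$ and $\approx_\text{t}$ in the hypothesis: the whole proof hinges on Lemma \ref{C.C.P}, which is a two-sided statement, so two-sided configuration equivalence is what actually gets used.
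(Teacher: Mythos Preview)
Your proof is correct and follows essentially the same route as the paper: apply Lemma~\ref{C.C.P} to pin $\{e_G\}\circledS\{e_H\}$, enlarge the sigma-algebra by singletons of partial products, and then chain Lemma~\ref{important} along each relator to force $W(J_i,\rho_i;\h)=e_H$. The only cosmetic difference is that the paper throws in \emph{all} singletons $\{W(J,\rho;\g)\}$ with $\n J\le\max_i\n{J_i}$ rather than just the prefixes $w_k^{(i)}$ you use, and then asserts the conclusion $\{W(J,\rho;\g)\}\circledS\{W(J,\rho;\h)\}$ without spelling out the induction you make explicit; your observation that the hypothesis must really be $G\approx_\text{t}H$ (since Lemma~\ref{C.C.P} is two-sided) is also borne out by the paper's own proof, which silently switches to $\tcon gA=\tcon hB$.
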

\begin{proof}
Fix a generating set of $G$, say $\g$. Apply Lemma \ref{C.C.P} to get a partition $\E$ of $G$. Consider a set of defining relators 
$$\{W(J_i,\rho_i;\g):\,i=1,\dotsc,m\}$$
Let $\A$ be a sigma algebra of $G$, generated by sets in $\E$ and 
$$\{W(J,\rho;\g)\}\,\quad \n J\leq\max\{\n{J_i}:i=1,\dotsc,m\}$$
But $G\approx H$, so there is a sigma algebra $\B$ in $H$, such that $\tcon gA=\tcon hB$. Lemma \ref{partitions in sigma algebras}, Lemma \ref{C.C.P}, imply that we can assume $\{e_H\}\in\B$, $\{e_G\}\circledS\{e_H\}$, and 
{\small $$\{W(J,\rho;\g)\}\circledS\{W(J,\rho;\h)\}, \text{ for $(J,\rho)$ with}\, \n J\leq\max\{\n{J_i}:i=1,\dotsc,m\}$$}
so, in particular, $W(J_i,\rho_i;\h)=e_H$, $i=1,\dotsc,m$, and hence the map
$$G\rightarrow H,\quad W(I,\delta;\g)\mapsto W(I,\delta;\h)$$
where $(I,\delta)$ ranges over arbitrary representative pairs, introduced an epimorphism. 
\end{proof}

\section{Two-sided Configuration and Quotients}
In \cite{rs1}, some results about quotients of two-sided configuration equivalent groups were obtained. In this section, we study the number of quotients. \par Let $N$ be a normal subgroup of $G$; We denote the quotient map, $G\rightarrow G/N$ by $q_N$, and in the cases where there is no ambiguity, we may drop $N$. Recall that if $\E$ is a partition of $G/N$, then 
\[q^{-1}(\E):=\{q^{-1}(E):E\in\E\}\]
becomes a partition of $G$. We refer to such a partition, when we say an \textit{$N$--partition} of $G$. One can easily check that the intersection of two $N$--partitions is itself an $N$--partition. An $N$--refinement of an $N$--partition $\hat\E$, is a refinement, $\hat{\E'}$, which is itself an $N$--partition. Let $\g=(g_1,\dotsc,g_k)$ be an ordered subset of $G$, such that $q(\g):=( q(g_1),\dotsc,q(g_k))$ is a generating set of $G/N$, then by \cite[Lemma 6.2.]{arw}, there is an ordered subset $\mathfrak{n}$ of $N$, such that $\hat{\g}=\g\oplus\mathfrak{n}$ becomes a generating set of $G$, called \textit{$N$-extension of} $\g$. By an \textit{$N$--configuration pair} we mean a configuration pair $\Ncp gE$ such that $\g$ is an $N$--extension generating set, and $\hat\E$ is an $N$--partition of $G$. \\
The notions of  \enquote{preserving presentation} and \enquote{recognizability} of a configuration pair are defined in \cite{rs1}. Let’s first consider the concept of recognizable configuration pair:
\begin{definition}
\label{recognizable}
Let $G$ be a group with a normal subgroup $N$ and an $N$--configuration pair $\Ncp gE$. We may say that $\Ncp gE$ is \textit{recognizable} w.r.t. $N$, if whenever $\tcon {\hat g}E=\tcon {\hat h}{\hat F}$, for a configuration pair $\Ncp hF$ of a groups $H$, then for every $g\in G\setminus N$, there is a representative pair, $(J_g,\rho_g)$ such that 
$$g=W(J_g,\rho_g;\hat\g)\quad\text{and}\quad W(J_g,\rho_g;\hat\h)F\cap F=\nil$$
 where $F\in\F$ is in the same color as an element of $\E$ which contains $N$.
 \end{definition}
 As a consequence of Lemma \ref{partitions in sigma algebras}, one can easily show that (see \cite[Lemma 3.3.]{rs}):
\begin{lemma}
\label{refinement}
Let $G$ be a group with a normal subgroup $N$. Assume $\Ncp gE$ is a recognizable configuration pair w.r.t. $N$. Then for each $N$--refinement $\hat{\E'}$ of $\hat\E$, the configuration pair $\Ncp g{E'}$ is recognizable w.r.t. $N$. 
\end{lemma}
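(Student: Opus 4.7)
The plan is to reduce the recognizability of $\Ncp g{E'}$ back to the hypothesised recognizability of $\Ncp gE$ by coarsening on the $H$-side. Given any candidate equivalence $\tcon{\hat g}{E'}=\tcon{\hat h}{F'}$ for a configuration pair $\Ncp h{F'}$ of a group $H$, I would first write $\hat\E=\{E_0,\dots,E_m\}$ with $N\subseteq E_0$ and decompose each $E_i=\bigsqcup_j E'_{i,j}$ into its constituent atoms of $\hat{\E'}$. The matching coarsening on the $H$-side is then forced: by the colouring convention of Notation \ref{notation2}, each atom $F'_{i,j}$ of $\hat{F'}$ satisfies $F'_{i,j}\circledS E'_{i,j}$, so setting $F_i:=\bigsqcup_j F'_{i,j}$ and $\hat\F:=\{F_0,\dots,F_m\}$ gives $F_i\circledS E_i$ for all $i$ and makes the pairs $(\sigma(\hat{\E'}),\sigma(\hat\E))$ and $(\sigma(\hat{F'}),\sigma(\hat\F))$ similar in the sense defined immediately before Lemma \ref{partitions in sigma algebras}.

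The second step is to promote this similarity to a two-sided configuration equality: I would invoke the two-sided sigma-algebra analogue of Lemma \ref{partitions in sigma algebras} (asserted in the remark following it) to obtain $\tcon{\hat g}E=\tcon{\hat h}F$. Since $\Ncp gE$ is recognizable with respect to $N$, this delivers, for every $g\in G\setminus N$, a representative pair $(J_g,\rho_g)$ satisfying $g=W(J_g,\rho_g;\hat\g)$ and $W(J_g,\rho_g;\hat\h)\,F_0\cap F_0=\nil$, where $F_0\in\hat\F$ shares its colour with the unique $E_0\in\hat\E$ containing $N$.

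The third step is to push this disjointness from $F_0$ down to the element of $\hat{F'}$ actually required by Definition \ref{recognizable}. Because $\hat{\E'}$ is an $N$-partition, a unique atom $E'_0\in\hat{\E'}$ contains $N$, and by construction $E'_0$ appears among the $E'_{0,j}$; hence the colour-matched $F'_0\in\hat{F'}$ satisfies $F'_0\subseteq F_0$. Since $W(J_g,\rho_g;\hat\h)\,F_0$ is disjoint from $F_0$, it is a fortiori disjoint from $F'_0\subseteq F_0$, so $W(J_g,\rho_g;\hat\h)\,F'_0\subseteq W(J_g,\rho_g;\hat\h)\,F_0$ is disjoint from $F'_0$. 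This verifies recognizability of $\Ncp g{E'}$ using the very same representative pairs $(J_g,\rho_g)$.

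The only step I would expect to demand any real care is the appeal to the two-sided sigma-algebra version of Lemma \ref{partitions in sigma algebras}, which the excerpt states only by analogy; once unpacked, however, it reduces to the observation that a two-sided configuration of the coarser sigma algebra can be read off directly from one of the finer algebra, so I anticipate no genuine obstruction.
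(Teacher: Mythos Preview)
Your proposal is correct and follows precisely the route the paper indicates: the paper gives no detailed proof but states the lemma as a direct consequence of Lemma~\ref{partitions in sigma algebras} (with a pointer to \cite[Lemma~3.3]{rs}), and your three steps—coarsening $(\hat{\F'},\hat{\E'})$ to $(\hat\F,\hat\E)$ via the similarity relation, invoking the two-sided version of Lemma~\ref{partitions in sigma algebras} to obtain $\tcon{\hat g}{E}=\tcon{\hat h}{F}$, and then pushing the disjointness down from $F_0$ to $F'_0\subseteq F_0$—are exactly the details that unpacking that reference would produce.
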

  What really makes working with recognizable configuration pair useful, is accompaniment of this notion to the following one:
 \begin{definition}
\label{pp}
Let $G$, $N$, $\g$, $\hat{\g}$ be regarded as above. We say that a configuration pair $(\hat{\g},\E)$ \textit{preserves presentation} w.r.t. $N$, if the following are held:
\\(I) $N\in\E$,
\\(II) if $\tcon {\hat g}{\hat E}=\tcon {\hat h}{\hat F}$, for a configuration pair $\Ncp hF$ of a groups $H$, and $N\circledS F\in\hat\F$, then $F$ is normal and the function bellow, defined on cosets of $N$, 
 $$W(J,\rho;\hat{\g})N\mapsto W(J,\rho;\hat\h)F$$
 is considered to be well-defined, where $(J,\rho)$ is an arbitrary representative pair.\\
\end{definition}
\begin{remark}
\label{refinement}
By \cite[Theorem 2.5]{rs1}, we know that if $G/N$ is finitely presented, then for each $N$--configuration pair $\Ncp gE$, there exists an $N$--refinement $\hat{\E'}$ of $\hat\E$, such that $\Ncp g{E'}$ preserves presentation w.r.t. $N$. 
\end{remark}
\par Let $\R$ and $\S$ be two partitions of a group $G$. Define the partition $\R\cap\S$ as follows:
 $$\atom{\sigma(\R\cup\S)}$$
 regarding this notation and Lemma \ref{partitions in sigma algebras} as well, we can see that it does not matter, if we work with one or more than one partition of $G$, more precisely:
 \begin{lemma}
\label{finite partitions}
Let $G$ be a group with a generating set $\g$. Assume that $\{\R_1,\dotsc,\R_r\}$, be a collection of partitions of $G$ and $G\approx_{\text t}H$ for a group $H$. Then, there are, a generating set $\h$, and a collection $\{\S_1,\dotsc,\S_r\}$ of partitions of $H$, such that 
$$\Con_{\text t}(\g,\R_i)=\Con_{\text t}(\h,\S_i)\quad i=1,\dotsc,r$$
\end{lemma}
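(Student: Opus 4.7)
The plan is to collapse the whole collection $\{\R_1,\dotsc,\R_r\}$ into a single partition of $G$, invoke $G\approx_{\text t}H$ there, and then use the similarity machinery of Lemma \ref{partitions in sigma algebras} (in its two-sided form, which the text remarks is obtained analogously) to peel off the individual $\S_i$'s one at a time.

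First, I would form the common refinement $\R:=\atom{\sigma(\R_1\cup\dotsb\cup\R_r)}$, which is a partition of $G$ finer than each $\R_i$, so that $\R_i\subseteq\sigma(\R)$ for every $i$. Writing $\A:=\sigma(\R)$ and $\A_i:=\sigma(\R_i)$, each $\A_i$ is a sigma subalgebra of $\A$. Since $G\approx_{\text t}H$, I can select a generating set $\h$ of $H$ and a sigma algebra $\B$ of $H$ with $\tcon g A=\tcon h B$; this gives a bijective matching $E_k\circledS F_k$ between $\atom A$ and $\atom B$ induced by the common coloring in Notation \ref{notation2}.

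Second, I would transfer each $\R_i$ across this matching. For any block $R\in\R_i$, write $R=E_{k_1}\cup\dotsb\cup E_{k_s}$ as a disjoint union of atoms of $\R$, and set $S:=F_{k_1}\cup\dotsb\cup F_{k_s}$. Let $\S_i:=\{S:R\in\R_i\}$ and $\B_i:=\sigma(\S_i)$. The assignment $R\mapsto S$ depends only on the index set $\{k_1,\dotsc,k_s\}$, so by construction $(\A,\A_i)\sim(\B,\B_i)$ in the sense defined just before Lemma \ref{partitions in sigma algebras}. Applying the two-sided analogue of that lemma yields $\tcon g{A_i}=\tcon h{B_i}$, i.e., $\Con_{\text t}(\g,\R_i)=\Con_{\text t}(\h,\S_i)$, for every $i=1,\dotsc,r$, as required.

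The point that requires verification, and which I expect to be the main (though modest) obstacle, is that each $\S_i$ is genuinely a partition of $H$. Disjointness transfers because distinct blocks $R,R'\in\R_i$ correspond to disjoint index sets in $\atom A$, and hence to disjoint unions of atoms of $\B$; covering transfers because $\bigcup_{R\in\R_i}R=G$ forces the full index set $\{1,\dotsc,|\atom A|\}$ to be used, and the matching $\circledS$ is a bijection onto the full index set of $\atom B$, so $\bigcup_{S\in\S_i}S=H$. Once this bookkeeping is in place, the conclusion follows immediately from the two-sided sigma-algebraic form of Lemma \ref{partitions in sigma algebras}.
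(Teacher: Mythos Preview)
Your proof is correct and follows essentially the same route as the paper: form the common refinement $\R=\atom{\sigma(\R_1\cup\dotsb\cup\R_r)}$ (which the paper writes as $\bigcap_{i=1}^r\R_i$), pass to $H$ via $G\approx_{\text t}H$, build each $\S_i$ so that $(\R,\R_i)\sim(\S,\S_i)$, and invoke the two-sided form of Lemma~\ref{partitions in sigma algebras}. The only difference is that you spell out the verification that each $\S_i$ is a genuine partition, which the paper leaves implicit.
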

\begin{proof}
Set $\R=\bigcap_{i=1}^r\R_i$. Let $(\h,\S)$ be a configuration pair of $H$ such that $\tcon gR=\tcon hS$. If $\S_i$ is a partition of $H$, such that $(\R,\R_i)\sim(\S,\S_i)$, $i=1,\dotsc,r$. Then, Lemma \ref{partitions in sigma algebras}, implies that $\Con_{\text t}(\g,\R_i)=\Con_{\text t}(\h,\S_i)$, $i=1,\dotsc,r$.
\end{proof}
With the concept of two-sided configuration, we can study the number of finite index subgroups; To this end, we provide the following definition:
\begin{definition}\label{r.w.r.t.C}
Let $G$ be a group, and $\mathcal C$ be a finite collection of normal subgroups of $G$. We may say $G$ is recognizable w.r.t $\mathcal{C}$, if the following are held:
\begin{enumerate}
\item $\mathcal{C}$ is closed under intersection,
\item $G/N$ is finitely presented, for all $N\in\mathcal C$, and
\item There exists a generating set $\g$ of $G$, and a collection $\{\E(N):\, N\in\mathcal{C}\}$ of partition of $G$ such that $(\g,\E(N))$ is a recognizable configuration pair w.r.t. $N$.
\end{enumerate}
\end{definition}
In the case which there is a collection $\mathcal{C}$, with properties in the above definition, the next theorem is worthy of attention:
\begin{theorem}\label{norml correspondence}
Suppose a group $G$ is recognizable w.r.t. a collection $\mathcal C$ of its normal subgroups. Let a group $H$ be two-sided configuration equivalent with $G$. Then there is a collection $\{\mathfrak{N}_N:\,N\in\mathcal C\}$ of normal subgroups of $H$ such that 
\begin{enumerate}
\item $\frac{G}{N}\cong\frac{H}{\mathfrak N_N}$,
\item $\mathfrak N_{N\cap M}=\mathfrak N_N\cap\mathfrak N_M$, for $N$ and $M$ in $\mathcal C$.
\end{enumerate}
\end{theorem}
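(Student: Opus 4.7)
The plan is to combine the recognizability/preserving-presentation machinery of \cite{rs1} with the common-refinement technique of Lemma \ref{finite partitions}, and then to read off (2) from the atomic correspondence.

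First I would fix the generating set $\g$ of $G$ and the family $\{\mathcal E(N):N\in\mathcal C\}$ of partitions provided by Definition \ref{r.w.r.t.C}, so that $(\g,\mathcal E(N))$ is recognizable w.r.t.\ $N$ for every $N\in\mathcal C$. Since each $G/N$ is finitely presented, the Remark following Definition \ref{pp} furnishes an $N$-refinement $\mathcal E'(N)$ of $\mathcal E(N)$ for which $(\g,\mathcal E'(N))$ preserves presentation w.r.t.\ $N$; the refinement lemma preceding that Remark confirms that $(\g,\mathcal E'(N))$ is still recognizable w.r.t.\ $N$. I would then form the common refinement $\hat{\mathcal E}:=\bigcap_{N\in\mathcal C}\mathcal E'(N)$, which is a finite partition since $\mathcal C$ is finite.

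The second step is to invoke Lemma \ref{finite partitions} simultaneously on the collection $\{\hat{\mathcal E}\}\cup\{\mathcal E'(N):N\in\mathcal C\}$. This delivers a single generating set $\h$ of $H$ together with partitions $\hat{\mathcal F}$ and $\mathcal S(N)$ of $H$ such that $\Con_{\text t}(\g,\hat{\mathcal E})=\Con_{\text t}(\h,\hat{\mathcal F})$ and $\Con_{\text t}(\g,\mathcal E'(N))=\Con_{\text t}(\h,\mathcal S(N))$ for each $N\in\mathcal C$, with all the pertinent similarity relations preserved. For each $N$ I define $\mathfrak N_N$ to be the unique element of $\mathcal S(N)$ with $\mathfrak N_N\circledS N$. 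The preserving-presentation property then yields a well-defined surjective homomorphism $G/N\to H/\mathfrak N_N$ sending $W(J,\rho;\g)N$ to $W(J,\rho;\h)\mathfrak N_N$, while recognizability prevents any non-identity coset of $N$ from being collapsed; hence this map is an isomorphism, which proves (1).

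For (2) I would exploit the fact that $\hat{\mathcal E}$ refines every $\mathcal E'(L)$, so each $L\in\mathcal C$---and in particular each intersection $N\cap M$---is a union of atoms of $\hat{\mathcal E}$. Writing $\hat{\mathcal E}=\{E_1,\dots,E_t\}$ and $I_L:=\{k:E_k\subseteq L\}$, we have $I_{N\cap M}=I_N\cap I_M$. The similarities $(\hat{\mathcal E},\mathcal E'(L))\sim(\hat{\mathcal F},\mathcal S(L))$ combined with $\mathfrak N_L\circledS L$ force $\mathfrak N_L$ to equal the union of the atoms of $\hat{\mathcal F}$ indexed by $I_L$, uniformly for $L\in\{N,M,N\cap M\}$. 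Since intersections of unions of atoms are governed by intersections of the corresponding index sets, $\mathfrak N_{N\cap M}=\mathfrak N_N\cap\mathfrak N_M$ follows. The principal obstacle is this last coherence step: one must verify that each $\mathfrak N_N$, initially extracted from the individual partition $\mathcal S(N)$, really coincides with the atomic description read off from $\hat{\mathcal F}$. This is precisely what Lemma \ref{partitions in sigma algebras} together with the similarity relation enforces, but some care is required to see that the $\circledS$-partner of a set in $\sigma(\hat{\mathcal E})$ is unambiguously determined---equivalently, that the partitions $\hat{\mathcal F}$ and $\mathcal S(N)$ all arise from a single common coloring of the two Cayley graphs.
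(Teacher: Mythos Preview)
Your argument is correct and, for part (1), coincides with the paper's: apply Lemma \ref{finite partitions} to the family $\{\mathcal E(N)\}$ (after passing to presentation-preserving $N$-refinements) to obtain a single generating set $\h$ and matching partitions on the $H$ side, then invoke the recognizability/preserving-presentation machinery of \cite{rs1} to produce the normal subgroups $\mathfrak N_N$ together with isomorphisms $G/N\to H/\mathfrak N_N$ of the explicit form $W(J,\rho;\g)N\mapsto W(J,\rho;\h)\mathfrak N_N$.

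Where you diverge is in part (2). You prove $\mathfrak N_{N\cap M}=\mathfrak N_N\cap\mathfrak N_M$ by tracking index sets of atoms in the common refinement $\hat{\mathcal E}$ and transporting them via the similarity $(\hat{\mathcal E},\mathcal E'(L))\sim(\hat{\mathcal F},\mathcal S(L))$. This works, and your closing worry about coherence is indeed resolved by the very construction in the proof of Lemma \ref{finite partitions}, where each $\mathcal S(L)$ is \emph{defined} through the similarity with the single partition $\hat{\mathcal F}$. The paper, however, bypasses this combinatorics entirely: since all of the isomorphisms are given by the \emph{same} word map (same $\g$, same $\h$), one has $W(J,\rho;\h)\in\mathfrak N_L$ if and only if $W(J,\rho;\g)\in L$, for every $L\in\mathcal C$. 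Because $\h$ generates $H$, this immediately yields $\mathfrak N_{N\cap M}=\mathfrak N_N\cap\mathfrak N_M$. Your atomic argument buys a more set-theoretic picture of why the $\mathfrak N_L$ fit together, at the cost of some bookkeeping; the paper's argument is shorter and exploits directly the fact that Lemma \ref{finite partitions} furnishes one $\h$ serving all $N\in\mathcal C$ simultaneously.
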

\begin{proof}
Let $\g$ and $\{\E(N):\,N\in\mathcal C\}$ be regarded as in Definition \ref{r.w.r.t.C}. By Lemma \ref{finite partitions}, there are a generating set $\h$ and partitions $\{\F(N):\,N\in\mathcal C\}$, such that 
$$ \Con_\text t(\g,\E(N))=\Con_\text t(\h,\F(N))\quad(N\in\mathcal C)$$
Hence, by \cite[Lemma 3.4.]{rs1}, we obtain normal subgroups $\{\mathfrak{N}_N:\,N\in\mathcal C\}$, along with following isomorphisms
\[ \frac{G}{N}\rightarrow\frac{H}{\mathfrak N_N},\quad \word J\rho\g N\mapsto\word J\rho\h\mathfrak{N}_N\]
With the above isomorphisms, the equation in (2) are easily proved.
\end{proof}
We know by \cite[Theorem 21.4]{intro} that, the number of subgroups of finite index $n$ in a finitely
generated group $G$ is finite. Since the intersection of finite index subgroups is again a finite index subgroup, by the above theorem, we obtain:
\begin{corollary}
Let $G$ and $H$ be finitely generated groups with the same two-sided configuration sets, and let $n\in\mathbb N$. Then $G$ and $H$ contain exactly the same number of normal subgroups of index $n$. Moreover, we have 
$$\prod\{G/N: |G:N|<\infty\}\cong\prod
\{H/\mathfrak N: |H:\mathfrak N|<\infty\}$$
\end{corollary}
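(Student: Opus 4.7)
The plan is to apply Theorem \ref{norml correspondence} to a suitably chosen finite collection $\mathcal{C}$ of normal subgroups of $G$. Fix $n\in\mathbb{N}$. Since $G$ is finitely generated, \cite[Theorem 21.4]{intro} implies that only finitely many normal subgroups of $G$ have index at most $n$; let $\tilde{N}$ be their intersection, which is itself normal of finite index in $G$. Let $\mathcal{C}$ consist of every normal subgroup of $G$ containing $\tilde{N}$. Then $\mathcal{C}$ is in bijection with the normal subgroups of the finite group $G/\tilde{N}$, so it is finite; it is closed under intersection; each $G/N$ with $N\in\mathcal{C}$ is finite and therefore finitely presented; and $\mathcal{C}$ contains every normal subgroup of $G$ of index at most $n$.

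Next I would verify that $G$ is recognizable with respect to $\mathcal{C}$ in the sense of Definition \ref{r.w.r.t.C}. Properties (1) and (2) follow from the preceding paragraph. For (3), fix any generating set $\g_0$ of $G$ and, using \cite[Lemma 6.2]{arw}, form $\g=\g_0\oplus\mathfrak{n}$ where $\mathfrak{n}$ is an ordered subset of $\tilde{N}$ yielding a $\tilde{N}$-extension of $\g_0$; since $\tilde{N}\subseteq N$ for every $N\in\mathcal{C}$, this same $\g$ is an $N$-extension of $\g_0$ for all such $N$. For each $N\in\mathcal{C}$, Remark \ref{refinement} (applicable because $G/N$ is finitely presented) together with the corresponding results of \cite{rs1} produces an $N$-partition $\E(N)$ for which $(\g,\E(N))$ is recognizable. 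Finally, Lemma \ref{refinement} ensures that recognizability survives replacement of each $\E(N)$ by a common $N$-refinement, so the collection $\{\E(N):N\in\mathcal{C}\}$ can be arranged inside a single sigma-algebra while retaining recognizability for each $N$ separately.

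Applying Theorem \ref{norml correspondence} now yields normal subgroups $\{\mathfrak{N}_N:N\in\mathcal{C}\}$ of $H$ satisfying $G/N\cong H/\mathfrak{N}_N$ and $\mathfrak{N}_{N\cap M}=\mathfrak{N}_N\cap\mathfrak{N}_M$. In particular $|H:\mathfrak{N}_N|=|G:N|$, so $N\mapsto\mathfrak{N}_N$ restricts to an injection from the index-$n$ normal subgroups of $G$ into those of $H$. Running the same argument with the roles of $G$ and $H$ reversed (using that $\approx_{\text t}$ is symmetric) furnishes a reverse injection, and equality of the two finite cardinalities follows. For the product statement, letting $n$ vary gives a consistent family of correspondences because of the intersection identity; their union is a bijection between all finite-index normal subgroups of $G$ and of $H$ preserving the isomorphism type of the quotient, and the claimed product isomorphism then results factor by factor.

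The principal obstacle is arranging simultaneously recognizable configuration pairs $(\g,\E(N))$ for every $N\in\mathcal{C}$ with a single generating set $\g$. Individual existence for each $N$ is provided by \cite{rs1}; the real content lies in observing that a single $\tilde{N}$-extension $\g$ of $\g_0$ suffices uniformly (because $\tilde{N}\subseteq N$ for every $N\in\mathcal{C}$), and that, via Lemma \ref{refinement}, recognizability is preserved under the common refinement required to amalgamate the partitions $\E(N)$ into a coherent family.
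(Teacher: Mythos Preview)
Your argument is correct and follows the paper's route: build a finite intersection-closed family $\mathcal C$ of finite-index normal subgroups, verify Definition~\ref{r.w.r.t.C}, apply Theorem~\ref{norml correspondence}, and finish by symmetry. The paper takes $\mathcal C$ to be the intersection-closure of the index-$n$ normal subgroups and forms the common generating set by concatenating lifts of generators for each $G/N_k$ and then passing to a $\bigcap_k N_k$-extension; your larger $\mathcal C$ (all normals above the single intersection $\tilde N$) together with one $\tilde N$-extension works just as well.

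Two small points need tightening. First, Remark~\ref{refinement} concerns \emph{presentation-preserving} pairs, not recognizable ones; recognizability of $(\g,\E(N))$ for finite $G/N$ comes from the power-function construction for polynomial-type groups in \cite{rs1}, which is precisely what the paper invokes. Second, for the product statement you cannot expect the correspondences $N\mapsto\mathfrak N_N$ produced for different values of $n$ to fit together into one global map, since they depend on the configuration pair chosen on the $H$ side; but you do not need this. For each fixed $n$ your argument already gives a bijection between the index-$n$ normal subgroups of $G$ and those of $H$ that matches the isomorphism types of the quotients, and the product isomorphism then follows by rearranging factors index by index.
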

\begin{proof}
Suppose $\{N_1,\dotsc,N_r\}$ is the collection of all normal subgroups of index $n$. Let $\mathcal C$ be a collection of normal subgroups of $G$ obtained from intersection of $N_k$'s. For $N\in\mathcal C$, assume that the power function of $G/N$ yields a configuration pair $(q_N(\g_N),q_N(\E(N)))$, for  an ordered set $\g_N$ and partition $\E(N)$ of $G$. Let $\g=\widehat{\oplus_{k=1}^r\g_{N_k}}$ be a $\bigcap_{k=1}^rN_k$--extension generating set of $G$. Hence, one can see, by $\g$ and $\mathcal C$ as above, the conditions of Definition \ref{r.w.r.t.C} are satisfied, and therefore, Theorem \ref{norml correspondence}, shows the existence of at least $r$ normal subgroups of index $n$ in $H$. The symmetry in the concept of configuration completes the proof.
\end{proof}
We denote by $\textbf{P}(G)$, the collection of normal subgroups of $G$, which their quotient in $G$ are polycyclic. Since every polycyclic group has a recognizable configuration pair, and since $\textbf{P}(G)$ is closed under intersection, an argument like the one in proof of the previous corollary leads to:
\begin{corollary}
Let $G$ and $H$ be finitely generated groups with $G\approx_\text tH$. Then $\textbf{P}(G)$ and $\textbf{P}(H)$ have the same cardinality. Furthermore if $\textbf P(G)$ is finite, then there is a bijection $\Psi:\textbf P(G)\rightarrow\textbf P(H)$ such that 
\begin{enumerate}
\item For every $N$ and $M$ in $\textbf P(G)$, $\Psi(N\cap M)=\Psi(N)\cap\Psi(M)$,
\item For each $N\in\textbf P(G)$, we have $G/N\cong H/\Psi(N)$.
\end{enumerate}
\end{corollary}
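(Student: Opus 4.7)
The plan is to reduce to Theorem \ref{norml correspondence} applied to finite subcollections of $\textbf{P}(G)$, in direct parallel to the preceding corollary. The two ingredients already flagged in the paragraph preceding the statement---namely that $\textbf{P}(G)$ is closed under finite intersection (which follows from the diagonal embedding $G/(N\cap M) \hookrightarrow G/N \times G/M$, since a subgroup of a finite direct product of polycyclic groups is polycyclic) and that every polycyclic quotient of $G$ admits a recognizable configuration pair---will let me verify Definition \ref{r.w.r.t.C} for any finite subcollection of $\textbf{P}(G)$ closed under intersection.

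Concretely, for any finite $\mathcal{C}_0 \subseteq \textbf{P}(G)$ I would let $\mathcal{C}$ denote its closure under intersection, which is still finite and still contained in $\textbf{P}(G)$. For each $N \in \mathcal{C}$ I pick a recognizable $N$-configuration pair $(\hat\g_N, \hat\E(N))$; then I build a single generating set $\g$ of $G$ by concatenating the $\hat\g_N$'s and, if necessary, padding with elements of $\bigcap_{N \in \mathcal{C}} N$, so that $\g$ is simultaneously an $N$-extension for every $N \in \mathcal{C}$---exactly the device used in the previous corollary. Conditions (1), (2), and (3) of Definition \ref{r.w.r.t.C} are then in hand, so Theorem \ref{norml correspondence} produces a family $\{\mathfrak{N}_N : N \in \mathcal{C}\}$ in $H$ with $G/N \cong H/\mathfrak{N}_N$ and $\mathfrak{N}_{N \cap M} = \mathfrak{N}_N \cap \mathfrak{N}_M$; the membership $\mathfrak{N}_N \in \textbf{P}(H)$ then follows because $H/\mathfrak{N}_N \cong G/N$ is polycyclic.

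For the cardinality statement I need the assignment $N \mapsto \mathfrak{N}_N$ to be injective on $\mathcal{C}$. This follows from the explicit form of the isomorphism supplied by Theorem \ref{norml correspondence}: composing $G/N \to H/\mathfrak{N}_N$, $W(J,\rho;\g)N \mapsto W(J,\rho;\h)\mathfrak{N}_N$, with the projection $G \to G/N$ yields a surjection $\varphi_N \colon G \to H/\mathfrak{N}_N$ sending each generator $g_i \in \g$ to $h_i \mathfrak{N}_N$. Because $\h$ is common to all members of $\mathcal{C}$---it is the single generating set of $H$ produced by Lemma \ref{finite partitions} inside the proof of Theorem \ref{norml correspondence}---the map $\varphi_N$, and therefore its kernel $N$, depends only on $\mathfrak{N}_N$. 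Hence $\mathfrak{N}_N = \mathfrak{N}_M$ forces $N = M$. Consequently every finite subcollection of $\textbf{P}(G)$ embeds into $\textbf{P}(H)$, giving $|\textbf{P}(H)| \geq |\textbf{P}(G)|$, and the symmetric argument swapping $G$ and $H$ yields equality.

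Finally, when $\textbf{P}(G)$ is finite I take $\mathcal{C} = \textbf{P}(G)$ and set $\Psi(N) := \mathfrak{N}_N$; injectivity together with the cardinality equality already established upgrades $\Psi$ to a bijection, while properties (1) and (2) in the statement are precisely the conclusions Theorem \ref{norml correspondence} delivers. The main point I expect to require some care is the uniform construction of $\g$ accommodating every $N \in \mathcal{C}$ simultaneously in condition (3) of Definition \ref{r.w.r.t.C}; but this is the same maneuver already executed in the proof of the previous corollary, so no fundamentally new obstacle arises.
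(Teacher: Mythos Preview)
Your proposal is correct and follows exactly the route the paper intends: the paper gives no separate proof for this corollary, instead remarking that since polycyclic quotients admit recognizable configuration pairs and $\textbf{P}(G)$ is closed under intersection, ``an argument like the one in the proof of the previous corollary'' suffices---which is precisely the reduction to Theorem \ref{norml correspondence} via Definition \ref{r.w.r.t.C} that you carry out. Your explicit injectivity argument (reading off $N$ as the kernel of $g_i\mapsto h_i\mathfrak N_N$ with a common $\h$) is a welcome clarification of a step the paper leaves implicit in both corollaries.
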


\section{The Concept of Configuration Equivalence is not Equivalent to Isomorphism}
The question whether the concepts of the configuration equivalence and isomorphism of groups are the same has been open since the beginning of configuration theory. Indeed, the answer is \enquote{No}, there are non-isomorphic groups with the same two-sided configuration sets. It is worth noting that, the groups in our example are solvable, thus this natural conjecture that these two concepts may be equivalent for amenable groups will be rejected. To provide our example, we may need to provide the following two technical lemmas:
\begin{lemma}
\label{epimorphism and configuration}
Let $\phi:G\rightarrow H$ be an epimorphism of groups. Suppose that $\g$ is a generating set of $G$, and $\F$ is a partition of $H$, then 
$$\Con_\text t(\g,\phi^{-1}(\F))=\Con_\text t(\phi(\g),\F)$$
\end{lemma}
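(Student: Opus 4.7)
The plan is to prove both inclusions by unpacking the definition of a two-sided configuration and exploiting the fact that $\phi$, being a surjective homomorphism, translates witnesses in $G$ to witnesses in $H$ and back. Write $\g=(g_1,\dotsc,g_n)$ and $\F=\{F_1,\dotsc,F_m\}$, so that $\phi(\g)=(\phi(g_1),\dotsc,\phi(g_n))$ generates $H$ by surjectivity, and $\phi^{-1}(\F)=\{\phi^{-1}(F_1),\dotsc,\phi^{-1}(F_m)\}$ is a partition of $G$ because the $F_j$ are disjoint and cover $H$. This guarantees that both sides of the asserted equality are well-defined sets of two-sided configurations.

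For the inclusion $\subseteq$, take a two-sided configuration $C=(c_0,c_1,\dotsc,c_{2n})$ on the left and a witness $x\in\phi^{-1}(F_{c_0})$ with $g_ix\in\phi^{-1}(F_{c_i})$ and $xg_i\in\phi^{-1}(F_{c_{i+n}})$ for $i=1,\dotsc,n$. Apply $\phi$ to these inclusions: since $\phi(g_ix)=\phi(g_i)\phi(x)$ and $\phi(xg_i)=\phi(x)\phi(g_i)$, the element $y:=\phi(x)\in H$ lies in $F_{c_0}$, and satisfies $\phi(g_i)y\in F_{c_i}$, $y\phi(g_i)\in F_{c_{i+n}}$. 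Hence $C\in\Con_\text t(\phi(\g),\F)$.

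For the inclusion $\supseteq$, take a two-sided configuration $C$ on the right with witness $y\in F_{c_0}$. Choose, using surjectivity of $\phi$, an element $x\in G$ with $\phi(x)=y$; then the relations $\phi(g_i)y\in F_{c_i}$ and $y\phi(g_i)\in F_{c_{i+n}}$ rewrite as $\phi(g_ix)\in F_{c_i}$ and $\phi(xg_i)\in F_{c_{i+n}}$, i.e., $g_ix\in\phi^{-1}(F_{c_i})$ and $xg_i\in\phi^{-1}(F_{c_{i+n}})$. Similarly $x\in\phi^{-1}(F_{c_0})$, so $x$ witnesses $C\in\Con_\text t(\g,\phi^{-1}(\F))$.

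There is no genuine obstacle here; the statement is essentially a definition-chase, and the only points to be careful about are (i) verifying at the outset that $\phi(\g)$ and $\phi^{-1}(\F)$ really are a generating set and a partition respectively, and (ii) using surjectivity of $\phi$ in the $\supseteq$ direction to lift the witness $y\in H$ back to some $x\in G$. Everything else follows from the homomorphism property $\phi(ab)=\phi(a)\phi(b)$ applied to both left and right translations by each $g_i$.
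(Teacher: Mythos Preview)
Your proof is correct and follows essentially the same approach as the paper's own proof: both directions are handled by applying $\phi$ (forward) or lifting via surjectivity (backward) to move between witnesses in $G$ and $H$. Your proposal is in fact slightly more careful, since you explicitly check that $\phi(\g)$ generates $H$ and that $\phi^{-1}(\F)$ is a partition of $G$, points the paper leaves implicit.
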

\begin{proof}
Assume that $C\in\Con_\text t(\g,\phi^{-1}(\F))$. So, there exists {\small $(x_0,x_1,\dotsc,x_{2n})$} having configuration $C$, that means, $x_k\in \phi^{-1}(F_{c_k})$, $k=0,1,\dotsc,2n$ and, 
$$x_k=g_kx_0,\quad x_{k+n}=x_0g_k\quad(k=1,\dotsc,n)$$
Therefore $\phi(x_k)\in F_{c_k}$, and 
$$\phi(x_k)=\phi(g_k)\phi(x_0),\quad \phi(x_{k+n})=\phi(x_0)\phi(g_k)$$
 Hence, $(\phi(x_0),\phi(x_1),\dotsc,\phi(x_{2n}))$ has configuration  $C$ in $\Con_\text t(\phi(\g),\F)$. 
\par Conversely, let a tuple $(y_0,y_1,\dotsc,y_{2n})$ have configuration {\small $C\in\Con_\text t(\phi(\g),\F)$}. That means, $y_k\in F_{c_k}$, $k=0,1,\dotsc,2n$ and 
$$y_k=\phi(g_k)y_0,\quad y_{k+n}=y_0\phi(g_k)
\quad (k=1,\dotsc,n)$$
Choose $x_0\in\phi^{-1}(y_0)$. Then 
$$(x_0,g_1x_0,\dotsc,g_nx_0,x_0g_1,\dotsc,x_0g_n)$$
in $G$ has configuration $C$, belonging to $\Con_\text t(\g,\phi^{-1}(\F))$. 
\end{proof}
We may call an homomorphism $\phi:G\rightarrow H$, \textit{generating-surjection}, if every ordered generating set of $H$ is an image of an ordered generating set of $G$. It is clear that each generating-surjection, becomes an epimorphism. We call two groups $G$ and $H$ \textit{generating-bijective}, if there are generating-surjections $\phi:G\rightarrow H$, and $\psi:H\rightarrow G$. From the following lemma, it will be understood that generating-bijective groups have same two-sided configuration sets:
\begin{lemma}
\label{technical}
Let $G$ and $H$ be two finitely generated groups. Suppose there is a generating-surjection $\phi:G\rightarrow H$. Then $\Con_\text t(H)\subseteq\Con_\text t(G)$.
\end{lemma}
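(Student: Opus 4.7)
The plan is to show the inclusion directly by producing, for each two-sided configuration set of $H$, an explicit configuration pair of $G$ whose set of two-sided configurations coincides with it. Take an arbitrary element of $\Con_\text t(H)$; by definition, it has the form $\tcon hF$ for some ordered generating set $\h$ of $H$ and some finite partition $\F$ of $H$.

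Next, I would use the hypothesis that $\phi:G\to H$ is a generating-surjection. By the definition recalled just before the statement of the lemma, every ordered generating set of $H$ is the image of an ordered generating set of $G$; in particular, there exists an ordered generating set $\g$ of $G$ with $\phi(\g)=\h$ (understood componentwise, matching the tuple lengths). Simultaneously, consider the pull-back partition $\phi^{-1}(\F)=\{\phi^{-1}(F):F\in\F\}$. Surjectivity of $\phi$ guarantees that the sets $\phi^{-1}(F)$ are non-empty and disjoint whenever the $F$'s are disjoint, and they cover $G$; together with finiteness of $\F$, this makes $\phi^{-1}(\F)$ a valid (finite) partition of $G$, so the pair $(\g,\phi^{-1}(\F))$ is a legitimate configuration pair for $G$.

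Now I would simply invoke Lemma \ref{epimorphism and configuration}, which is exactly tailored to this situation. It yields
\[\Con_\text t(\g,\phi^{-1}(\F))=\Con_\text t(\phi(\g),\F)=\tcon hF,\]
and hence $\tcon hF\in\Con_\text t(G)$. Since the initial element of $\Con_\text t(H)$ was arbitrary, the inclusion $\Con_\text t(H)\subseteq\Con_\text t(G)$ follows.

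There is no real obstacle here: the entire content of the lemma is the combination of the preceding Lemma \ref{epimorphism and configuration} with the definition of generating-surjection. The only small point to verify carefully is that $\phi^{-1}(\F)$ qualifies as a partition (non-empty parts, covering, disjointness) and that the matching of tuples $\phi(\g)=\h$ makes sense coordinate-wise, both of which are immediate.
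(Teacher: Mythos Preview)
Your proof is correct and follows essentially the same approach as the paper: pick an arbitrary configuration pair $(\h,\F)$ of $H$, lift $\h$ to a generating set $\g$ of $G$ via the generating-surjection, and apply Lemma \ref{epimorphism and configuration} to identify $\tcon hF$ with $\Con_\text t(\g,\phi^{-1}(\F))$. The only difference is that you spell out the verification that $\phi^{-1}(\F)$ is a genuine partition, which the paper leaves implicit.
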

\begin{proof}
Consider a configuration pair $(\h,\F)$ of $H$. There exists a generating set $\g$ of $G$ such that $\h=\phi(\g)$. Thus, by previous lemma,
$$\tcon hF=\Con_\text t(\phi(\g),\F)=\Con_\text t(\g,\phi^{-1}(\F))$$
This completes the proof.
\end{proof}
It is obvious that isomorphic groups are generating-bijective, but the converse is not always true, as the following theorem will show this:
\begin{theorem}\label{non-isomorphic generatin-bijective groups}
There exist non-isomorphic finitely generating groups with the same two-sided configuration sets.
\end{theorem}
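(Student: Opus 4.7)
The strategy is to produce two non-isomorphic finitely generated solvable groups that are generating-bijective, and then invoke Lemma \ref{technical} in both directions: if generating-surjections $\phi\colon G\to H$ and $\psi\colon H\to G$ exist, applying that lemma twice immediately yields $\Con_{\text t}(G)=\Con_{\text t}(H)$, while $G\not\cong H$ gives the desired separation between two-sided configuration equivalence and isomorphism.

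To recognise a quotient map $\phi\colon G\twoheadrightarrow H$ as a generating-surjection, I would use a Frattini-subgroup argument. Write $\Phi(G)$ for the intersection of all maximal subgroups of $G$. Suppose $\ker\phi\subseteq\Phi(G)$ and let $(h_1,\dots,h_n)$ be a generating tuple of $H$. Any lift $(g_1,\dots,g_n)$ in $G$ satisfies $\langle g_1,\dots,g_n\rangle\cdot\ker\phi=G$. Since $G$ is finitely generated, every proper subgroup is contained in some maximal subgroup, which in turn contains $\Phi(G)\supseteq\ker\phi$; hence $\langle g_1,\dots,g_n\rangle$ cannot be proper, and $(g_1,\dots,g_n)$ already generates $G$. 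So Frattini containment of the kernel is a clean sufficient condition for the lemma to apply.

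The problem therefore reduces to producing a pair $(G,H)$ of non-isomorphic finitely generated solvable groups together with normal subgroups $N\trianglelefteq G$ and $M\trianglelefteq H$ such that $N\subseteq\Phi(G)$, $M\subseteq\Phi(H)$, $G/N\cong H$, and $H/M\cong G$. Note that the composition $G\to H\to G$ of the two quotient maps is then a surjective endomorphism of $G$ whose kernel $N$ must be non-trivial (otherwise $G\cong H$); both $G$ and $H$ are forced to be non-Hopfian. I would therefore search for the example among finitely generated non-Hopfian solvable groups in the spirit of Hall, Baumslag, or Abels, choosing the construction so that the relevant kernels sit inside the Frattini subgroups, and so that some invariant (for instance the structure of the centre, a torsion invariant of a characteristic abelian section, or an invariant of an abelian subquotient) genuinely distinguishes $G$ from $H$.

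The hard part is the explicit construction: assembling non-Hopficity, Frattini containment on \emph{both} sides simultaneously, and a distinguishing invariant within a single pair of solvable groups is the delicate point; the acknowledgement to Y.\ de Cornulier suggests precisely that the geometric-group-theoretic ingenuity lies here. Once such $G$ and $H$ are exhibited and these three conditions are verified, the Frattini argument above furnishes mutual generating-surjections, and a double application of Lemma \ref{technical} closes the proof and, by solvability, simultaneously refutes the amenable version of the conjecture.
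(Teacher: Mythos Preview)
Your overall framework is exactly right and matches the paper: exhibit two non-isomorphic finitely generated groups that are generating-bijective, then apply Lemma~\ref{technical} in both directions. The Frattini criterion you isolate for a quotient map to be a generating-surjection is correct and clean, and is arguably more transparent than the device the paper actually uses.

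The genuine gap is that you never produce the example; you explicitly flag the construction as ``the hard part'' and leave it open. Since the entire content of the theorem is the \emph{existence} of such a pair, an argument that stops short of exhibiting one is not a proof. The paper resolves this concretely: it builds a finitely generated solvable mother group $K$ of upper-triangular $3\times 3$ matrices over $\mathbb{Z}[t,t^{-1}]$ with diagonal in $\langle t\rangle$, identifies $Z(K)\cong\mathbb{Z}[t,t^{-1}]$, and writes down an explicit automorphism of $K$ that shifts the central subgroups $N_m=t^m\mathbb{Z}[t]$ to $N_{m+1}$. With $G=K/N_0$ and $H=K/(2\mathbb{Z}\oplus N_1)$ one obtains mutual surjections $G\twoheadrightarrow H$ and $H\twoheadrightarrow G$ (using $G\cong K/N_1$ via the automorphism); $G$ is torsion-free while $H$ has $2$-torsion, so $G\not\cong H$. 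The paper certifies generating-surjectivity not through the Frattini subgroup but through this shift automorphism. Your Frattini route could plausibly be made to work for this very pair, but verifying that the small central kernels lie inside the respective Frattini subgroups still presupposes having the explicit groups in hand --- which is precisely what your proposal omits.
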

\begin{proof}
Put $\mathbf R:=\mathbb Z[t,t^{-1}]$, the ring of Laurent polynomials. Let $K$ be a group of matrices
\[\begin{pmatrix}
1&B&D\\
0&A&C\\
0&0&1
\end{pmatrix}
\]
where $B$, $C$ and $D$ belongs to $\mathbf{R}$, and $A\in\langle t\rangle$. The group $B$ is easily checked to be finitely generated; Indeed, if we denote the above matrix by $(A,B,C,D)$, then $(\mathbf k_1,\mathbf k_2,\mathbf k_3)$, in which 
 \[ \mathbf k_1=(t,0,0,0)\quad \mathbf k_2=(1,1,0,0)\quad \mathbf k_3=(1,0,1,0)\]
  becomes a generating set of $K$, because of the following equations:
  \begin{align*}
 \mathbf k_1^{m}&=(t^k,0,0,0), \quad \mathbf k_1^{-m}\mathbf k_2\mathbf k_1^k=(1,t^m,0,0)\\
 \mathbf k_1^m\mathbf k_3\mathbf k_1^{-m}&=(1,0,t^m,0),\quad [\mathbf k_3,\mathbf k_1^{-m}\mathbf k_2\mathbf k_1^m]=(1,0,0,t^m)
 \end{align*}
 \noindent where $m$ is an integer. The center of this group, $Z(K)$, consists of unipotent matrices with a single possibly non-trivial element in the upper right corner
\[ Z(K)=\{(1,0,0,D):\, D\in\mathbf R\}\] 
It is clearly isomorphic to $\mathbf{R}$. We can rewrite the product of the group as bellow:
 \[ (A,B,C,D)(X,Y,Z,W)=(AX,BX+Y,C+AW,D+BW+Z)\]
 By the above equality, one can see that the map
 \begin{equation*}
\Phi: K\rightarrow K,\quad 
(A,B,C,D)\mapsto(A,B,tC,tD)
 \end{equation*} 
 introduced an automorphism of $K$. 
\par Set $N_m:=t^m\mathbb Z[t]$, $m\in\mathbb Z$. The automorphism $\Phi$, implies that $K/N_m\cong K/\Phi(N_m)=K/N_{m+1}$, $m\in\mathbb Z$. Now, let $G:=K/N_0$ and $H:=K/(2\mathbb Z\oplus N_1)$. These groups are not isomorphic, for $G$ is torsion free and $H$ is not.
\par Note that if $\mathfrak k$ is an ordered set of $K$, such that its image under the natural quotient map forms a generating set of $K/N_m$, then, also, the image of $\Phi(\mathfrak{k})$ will form a generating set of $K/N_m$. Thus, $G$ and $H$ are generating-bijective. This and Lemma \ref{technical}, complete the proof. 
\end{proof}
\begin{remark}
The groups $G$ and $H$ in the above theorem are solvable. this shows that the two concepts, configuration equivalence and isomorphism, are not equivalent for solvable and hence amenable groups. these groups are not finitely presented and they are not residually finite, for they are not Hopfian. 
\end{remark}

\end{document}